\newtheorem{theorem}{Theorem}
\newtheorem{lemma}{Lemma}
\newtheorem{definition}{Definition}
\newtheorem{proof}{Proof}
\DeclareMathAlphabet{\pazocal}{OMS}{zplm}{m}{n}
\newtheorem{corollary}{Corollary}
\newtheorem{remark}{Remark}
\newcommand{\lJump}{[\![}
\newcommand{\rJump}{]\!]}
\newcommand{\pdd}[2]{\frac{\partial #1}{\partial #2}}
\newcommand{\bsprod}[2]{\left\langle #1,#2\right\rangle}
\newcommand{\en}[1]{{\left\vert\kern-0.25ex\left\vert\kern-0.25ex\left\vert #1 
    \right\vert\kern-0.25ex\r6ight\vert\kern-0.25ex\right\vert}}
\def\BState{\State\hskip-\ALG@thistlm}
\begin{document}
\title{On energy-stable and high order finite element methods for the wave equation in heterogeneous media with perfectly matched layers}
\author{Gustav  Ludvigsson\thanks{Division of Scientific Computing, Department of Information Technology, Uppsala University, Sweden.}, \and Kenneth Duru\thanks{Mathematical Sciences Institute, The Australian National University, Australia.}, \and Gunilla Kreiss \thanks{Division of Scientific Computing, Department of Information Technology, Uppsala University, Sweden.} }
\pagenumbering{arabic}
\maketitle
\begin{abstract}
 This paper presents a stable finite element approximation for the acoustic wave equation on second-order form, with perfectly matched layers (PML) at the boundaries. Energy estimates are derived for varying PML damping for both the discrete and the continuous case. Moreover, a priori error estimates are derived for constant PML damping. Most of the analysis is performed in Laplace space. Numerical experiments in physical space validate the theoretical results
\end{abstract}
%%%%
%%%%
\textit{Keyword:}
PML, Wave equation, FEM, Laplace transforms.
%%%%%%%%%%%%%%
%%%%%%%%%%%%%%

\section{Introduction\label{sec:introduction}}
Accurate and effective domain truncation schemes are necessary for efficient numerical simulation of wave propagation problems in many applications. This is because waves typically propagate over large domains, while the domain of interest usually is much smaller. Solving the problem over the whole domain may be computationally costly or even impossible. It is necessary to truncate the large domain so that the problem only has to be solved over the smaller domain of interest.
When truncating the domain, an artificial boundary is introduced, and careful treatment of this artificial boundary is necessary to get 
reliable results from numerical simulations.

There are now two powerful and equally appealing approaches for realizing an effective domain truncation scheme. The first is to use a non-reflecting boundary condition (NRBC), \cite{collino1993high,hagstrom2010radiation,givoli2004high}, which is a boundary condition that is enforced at the artificial boundary such that spurious reflections from the artificial boundary are sufficiently minimized. The second one is adding an absorbing layer where the underlying equations are transformed such that waves decay rapidly as they propagate in the layer. The perfectly matched layer (PML), \cite{berenger1994,duru2016role,DuKr}, is an absorbing layer with the desirable property that all waves enter the layer and are absorbed without any reflections, regardless of frequency and angle of incidence. 

The PML technology was initially introduced for electromagnetic waves by B{\'e}renger in \cite{berenger1994}. 
The approach has since then been extended to other wave propagation problems such as elastic wave propagation, see for example \cite{appelo2006new,ElasticDG_PML2019,DuKrSIAM}, and acoustic wave propagation, for example, \cite{grote2010efficient,abarbanel1999well}. Please see \cite{https://doi.org/10.48550/arxiv.2201.03733} for a recent review. In this paper, we consider the PML formulation for the second-order scalar wave equation presented in \cite{grote2010efficient}.

The PML approach has grown popular over the last decades due to its versatility,  efficiency, and ease of implementation using standard numerical methods such as finite difference, finite volume, finite element, spectral, and discontinuous Galerkin methods. One caveat is that in practice, once the PML-formulation is discretized, the PML approach is no longer completely non-reflecting. This is due to the discretization, which disturbs the perfect matching, and the finite PML width, which means that the residual outgoing waves will reflect from the outer boundary and travel back through the layer. 

However, stability for the chosen continuous PML and the corresponding discretization is imperative for the approach to be efficient and robust and for results from simulations to be reliable. There is a large body of work treating the stability and well-posedness of PML formulations at the continuous PDE level.
For instance, in \cite{BecaheKachanovska2021,DIAZ20063820} the Cagniard--De Hoop method was used to derive analytical solutions of the continuous PML for the 2D and 3D acoustic wave equation in a homogeneous medium.
We note that the stability analysis of the PML at the discrete level has attracted less attention in the literature. This is attributable to the complex and asymmetrical forms of the PML equations, which make applying standard analytical tools such as the energy methods technically difficult.

The stability of a PML model has been studied from a mode perspective. In \cite{becache2003stability} a necessary geometric stability condition was formulated for the initial value problems of general first-order hyperbolic systems. When violated, exponentially growing modes in time are present, making the layer useless. However, even if the Cauchy problem does not have growing modes, there can still be stability issues related to boundaries and interfaces. Mode analysis has to some degree been extended to include such issues, see \cite{DuKrSIAM,DuKr,duru2015boundary,duru2012accuracy,DURU2014757}. However, the mode-based stability results are also cumbersome to extend to discrete settings.

A standard approach to the stability of a numerical scheme is to first derive an energy estimate for the corresponding continuous problem and then mimic the energy analysis in the discrete setting. In physical space, there are severe technical difficulties in deriving energy estimates for problems with PMLs, especially when including boundaries, heterogeneous medium parameters, and allowing for spatially varying damping parameters. In \cite{appelo2006perfectly} first-order constant coefficient Cauchy problems are considered. A temporally decaying energy in physical space could be found by requiring several algebraic inequalities, which must be verified in Fourier-Laplace space. However, the energy involves combinations of higher-order derivatives of the unknowns and is therefore difficult to work within a discrete setting. An important step was recently taken in  \cite{baffet2019energy}, where energy estimates, valid in both continuous and discrete settings, are derived in physical space for the PML formulation for the second-order wave equation presented in \cite{grote2010efficient}, but only for constant media and damping functions. 

This paper aims to formulate a stable and accurate finite element method (FEM) for the wave equation on a domain surrounded by a PML. We take the approach of deriving an energy estimate in Laplace space, as in \cite{duru2019energy}, where stability results are derived for a discontinuous Galerkin spectral element method (DGSEM) for the wave equation as a first-order system. The analysis in \cite{duru2019energy} allows constant damping parameters, while the stability analysis in this paper extends the analysis to also include continuously variable damping coefficients. The key step taken in  \cite{duru2019energy} was to construct physics-based numerical fluxes with appropriate penalty weights, which allowed the derivation of energy estimates in the Laplace space. For continuous Galerkin FEM, we show that the choice of the weak form and solution space are crucial for proving numerical stability and convergence of high order FEM for the PML for the wave equation in general domains. The chosen weak form allows us to prove energy estimates for heterogeneous medium parameters with piecewise continuous wave speed and continuously varying damping functions, both in continuous and discrete settings. In addition, we prove a convergence result. These are the main results of this paper. Also worth noting is that the paper demonstrates the strength of the strategy of doing energy analysis in Laplace space. We believe that this strategy can facilitate stability results for many other PML problems.

The outline of this paper is as follows. In Section \ref{sec:equations} we present the underlying wave problem in three space dimensions. In Section \ref{sec:pml} we introduce the PML formulation. In Section \ref{sec:laplace} we derive continuous energy estimates. In Section \ref{sec:fem} the weak form is presented. Numerical stability and a priori error estimates are derived in Section \ref{sec:discrete}. The numerical experiments are presented in Section \ref{sec:experiments}. Finally, in Section \ref{sec:discussion} we summarise the results and draw conclusions.
\section{Equations \label{sec:equations}}
We consider the time dependent acoustic wave equation on second order form in a source free heterogeneous medium
\begin{equation} \label{eq:wave_eq}
\frac{1}{\kappa}\ddot{u}   - \div\left(\frac{1}{\rho} \grad {u}\right) = 0,  \quad  \bold{x}\in \Omega,   \quad t\in(0,T), 
\end{equation}
where $\bold{x} = (x,y,z) \in \Omega \subset \mathbb{R}^3$, $u:\Omega \times (0,T) \rightarrow \mathbb{R}$ is the solution to the wave equation \eqref{eq:wave_eq}, and $T>0$ is the final time. The medium is defined by the wave speed $c = \sqrt{\kappa/\rho} $,  where  $\kappa >0$ is the bulk-modulus and $\rho >0$ is the density.  
The wave equation \eqref{eq:wave_eq} is subjected to  the initial conditions
\begin{equation} \label{eq:u0}
u(\bold{x},t = 0) = g_1(\bold{x}), \quad \dot{u}(\bold{x},t = 0) = g_2(\bold{x}), \quad \bold{x} \in  \Omega,
\end{equation}
where we assume that the initial data is compactly supported.
The domain $\Omega$ is bounded by the boundary $\partial \Omega$. On $\partial \Omega$ we augment the wave equation with the well-posed  linear boundary conditions
%%%%
\begin{equation} \label{eq:general_bc}
\frac{1-r}{2} Z\dot{u} + \frac{1+r}{2}\grad u \cdot \mathbf{n} = 0, \quad \bold{x} \in \partial \Omega,
\end{equation}
where $\mathbf{n}$ is the outward unit normal on the boundary $\partial \Omega$, $r$ with $|r| \le 1$ is the reflection coefficient and $Z = \rho c >0$ is the wave impedance.
Note that $r=-1$  corresponds to a Dirichlet boundary condition
\begin{equation} \label{eq:dirichlet}
{u}=0 \implies \dot{u} = 0, \quad \bold{x} \in \partial \Omega,
\end{equation}
  $r=1$  yields a Neumann boundary condition
\begin{equation} \label{eq:neumann}
\grad{u} \cdot \mathbf{n} = 0, \quad \bold{x} \in \partial \Omega,
\end{equation}
and $r=0$  corresponds to the classical first order absorbing boundary condition
\begin{equation} \label{eq:neumann}
\frac{1}{2} Z\dot{u} + \frac{1}{2}\grad u \cdot \mathbf{n} = 0, \quad \bold{x} \in \partial \Omega.
\end{equation}
\begin{remark}
We have tacitly considered homogeneous boundary data under the usual assumptions of compatibility between boundary and initial data. The analysis follows through for the case of non-homogeneous boundary data, but this would complicate the algebra.
\end{remark}

We will now derive the standard energy estimate, on which a proof of well-posedness can be based. 
To begin, for real functions $u,v$ defined over $\Omega$ we introduce the inner products
$$
\left(u,v\right) = \int_{\Omega} u^*v d \bold{x},
$$
and for the boundary
$$
\bsprod{u}{v} =\int_{\partial \Omega}  u^*v dS
$$
together with the corresponding norm
$$
\|u\|^2_{b} = (b u,u), \quad b(\mathbf{x})>0.
$$
If $b \equiv 1$ we will omit the subscript $b$.
%%%
We define the energy as
\begin{equation} \label{eq:energy}
E(t)  = \frac{1}{2}\left(\|\dot{u} \|^2_{1/\kappa} +  \|\grad u \|^2_{1/\rho} \right).
\end{equation}

To begin with assume that the material parameters are continuous and there is a sufficiently smooth solution. We will now use the standard technique to derive an energy estimate.
Multiply equation \eqref{eq:wave_eq} by
$\dot{u}$ and integrate by parts in space to get
%%%%%
\begin{equation}
\left(\frac{1}{\kappa}\ddot{u},\dot{u}\right)  + \left(\left(\frac{1}{\rho} \grad {u}\right), \grad {\dot{u}}\right) - \bsprod{\frac{1}{\rho} \left( \grad {u} \cdot \bold{n}\right)}{\dot{u}} = 0.
\end{equation}
%%%%%
The boundary term vanishes for the cases when $|r| = 1$, which corresponds to either the Dirichlet boundary condition \eqref{eq:dirichlet} or the Neumann boundary condition \eqref{eq:neumann}. If $r\ne -1$ we have
\begin{equation}
\bsprod{\frac{1}{\rho} \left( \grad {u} \cdot \bold{n}\right)}{\dot{u}} = \bsprod{c\frac{1-r}{1+r}\dot{u}}{ \dot{u}} ,
\end{equation}
where $c>0$ is the wave speed. It follows that
\begin{equation}
\frac{1}{2} \frac{d}{dt}\left(\|\dot{u} \|^2_{1/\kappa} + \|\grad  u \|^2_{1/\rho} \right) + BT= 0, 
\end{equation}
where
\begin{align}
    BT = \begin{cases}
0,\qquad r =-1\\
\bsprod{c\frac{1-r}{1+r}\dot{u}}{ \dot{u}},\quad r \ne -1.
\end{cases}
\end{align}
In particular we note that $BT \ge 0$ since $|r|\le1$. Thus
\begin{equation}
\frac{d}{dt} E(t) = -BT \le 0,
\end{equation}
which after integration in time gives us the energy estimate
\begin{equation}
E(t) \le E(0) \quad \forall t \geq 0.
\end{equation}
If $|r|=1$ then we have energy conservation, that is $E(t) = E(0)$ for all $t\ge 0$.
If $|r|<1$ then energy is lost through the boundaries.

We can include piece-wise continuous material properties in the above. At interfaces $\Gamma$ where the wave speed $c>0$ is discontinuous we prescribe the interface condition
\begin{equation}\label{eq:interface_condition}
{\left\lJump  {u}  \right\rJump } =0, \quad {\lJump\frac{1}{\rho}\grad{u}\cdot \mathbf{n}\rJump } =0,
\end{equation}
where $\mathbf{n}$ 

is a unit normal at the interface.
In the derivation of the energy estimate interface terms appear, but vanish due to this condition, and the same energy estimate holds also for this case. In the following analysis we will include the possibility of material interfaces.
\section{PML\label{sec:pml}}

In this section we re-derive the PML problem used in \cite{grote2010efficient} using the well-known complex coordinate stretching technique \cite{chew19943d}.

We begin with introducing the Laplace transform in time. Let $\widehat{u} = \mathcal{L}(u)$ be the Laplace transform of $u$ defined by
\begin{equation}\label{eq:laplace}
\widehat{u}(x,y,z,s) = \mathcal{L}(u):= \int_0^\infty e^{-st}u(x,y,z,t) dt,
\end{equation}
where $s\in\mathbb{C}$ with $Re(s) = a>0$.
Laplace transforming the  wave equation \eqref{eq:wave_eq} in time, with homogeneous initial conditions, yields
\begin{equation}\label{eq_laplace}
\frac{s^2}{\kappa}\widehat{u}   - \div\left(\frac{1}{\rho} \grad {\widehat{u}}\right) = 0  
\end{equation}

We introduce the PML coordinate transformation
%%%
\begin{equation}\label{eq:ct}
\eta \rightarrow \widetilde{\eta} = \eta + \frac{1}{s} \int_0^\eta d_\eta(\theta) d\theta,\quad\eta=x,y,z,  %\\
%y &\rightarrow \tilde{y} = y + \frac{1}{s} \int_0^y d_y(z) dz.
\end{equation}
where $d_\eta(\eta)$ are the damping functions, and $s$ is dual Laplace time variable.
Note that 

\begin{equation}\label{eq:laplace_s}
\pdd{\ }{\widetilde{\eta}} = \frac{1}{S_\eta} \pdd{\ }{\eta}, \quad 
S_\eta = 1 + \frac{d_\eta \left(\eta\right)}{s}.
\end{equation}
%%%
In the Laplace space we apply the coordinate transformation to the wave equation  \eqref{eq_laplace} yielding
%%%
\begin{equation} \label{eq:reduced_laplace}
\frac{s^2}{\kappa}\widehat{{u}} = {\mathbf{\nabla}}_d\cdot \left( \frac{1}{\rho}\grad_d \widehat{u}\right),
\end{equation}
%%%

where
$\grad_d = (\frac{1}{S_x}\pdd{ }{x},
\frac{1}{S_y}\pdd{ }{y},\frac{1}{S_z}\pdd{ }{z})^T$.

To invert the Laplace transforms we multiply \eqref{eq:reduced_laplace} with $S_x S_y S_z$, we have
\begin{equation}\label{eq:reduced_laplace_simple}
%s^2 S_x S_y \widehat{u} = \pdd{\ }{x} (c^2\frac{S_y}{S_x}\pdd{\ }{x})+ \pdd{\ }{x} (c^2\frac{S_x}{S_y}\pdd{\ }{y}).
\frac{s^2 S_x S_yS_z}{\kappa}\widehat{u}   - \div\left(\mathbf{S}  \frac{1}{\rho} \grad_d {\widehat{u}}\right) = 0, \quad
\mathbf{S} = 
\begin{pmatrix}
{S_y S_z} & 0 & 0\\
0 & {S_x S_z} & 0\\
0 & 0 & {S_x S_y}
\end{pmatrix}.
\end{equation}

Introducing the diagonal matrices
$$
\mathbf{d} = 
\begin{pmatrix}
d_x & 0 & 0\\
0 & d_y & 0\\
0 & 0 & d_z
\end{pmatrix}, \quad 
\boldsymbol{\Gamma} = 
\begin{pmatrix}
d_y+d_z-d_x & 0 & 0\\
0 & d_x + d_z-d_y & 0\\
0 & 0 & d_x + d_y-d_z
\end{pmatrix},
$$
$$
\boldsymbol{\Upsilon} = 
\begin{pmatrix}
d_yd_z & 0 & 0\\
0 & d_xd_z & 0\\
0 & 0 & d_xd_y
\end{pmatrix},
$$
 the auxiliary variables $\widehat{\boldsymbol{\phi}}=\left(\widehat{\phi}^x,\widehat{\phi}^y, \widehat{\phi}^z\right)^T$, $\widehat{\psi}$ with 

\begin{subequations}
\label{eq:auxiliary_variables_laplace}
\begin{alignat}{2}
s\widehat{\boldsymbol{\phi}} &= -\mathbf{d} \widehat{\boldsymbol{\phi}}+\left(\frac{1}{\rho}\boldsymbol{\Gamma} + \frac{1}{s\rho} \boldsymbol{\Upsilon}\right) \grad{\widehat{u}},\\
s\widehat{\psi} & = \widehat{u},
\end{alignat}
\end{subequations}
and inverting the Laplace transforms results in the PML equation in the time-domain

\begin{subequations}
\label{eq:pml_system_3D}
\begin{alignat}{3}
\frac{1}{\kappa}\ddot{u} + \frac{1}{\kappa}\sum_{\eta} d_\eta\dot{u} + \frac{d_xd_yd_z}{\kappa}\sum_{\eta}\frac{1}{d_\eta} u  
&=  
\div\left(\frac{1}{\rho} \grad {u}\right) + \div{\boldsymbol{\phi}}  -\frac{d_xd_yd_z}{\kappa} \psi ,\label{eq:pml_1_3D} \\ 
%%%%%
  \dot{\boldsymbol{\phi}} &= -\mathbf{d} \boldsymbol{\phi}+\frac{1}{\rho}\boldsymbol{\Gamma} \grad{u}+\frac{1}{\rho} \boldsymbol{\Upsilon} \grad{\psi}, 
  \label{eq:pml_2_3D}
  \\
  %%%%%%%%
		\dot{\psi} &= u.\label{eq:pml_3_3D}
\end{alignat}
\end{subequations}

We will use boundary and interface conditions, which in Laplace space have the form

\begin{align}\label{eq:rotated_bc_0}
\frac{1-r}{2}c\left(\left(\mathbf{S}\mathbf{n}\right)\cdot \mathbf{n}\right) \left(s\widehat{u}\right) + \frac{r+1}{2}\left(\frac{1}{\rho} \mathbf{S}\grad_d \widehat{u}\right)\cdot \mathbf{n} =0,
\end{align}
%%%%%%%%%%%
and
%%%%%%%%%%%
\begin{align}\label{eq:rotated_IC_0}
{\left\lJump  \widehat{u}  \right\rJump } =0, \quad   {\lJump\left(\frac{1}{\rho} \mathbf{S}\grad_d \widehat{u}\right)\cdot \mathbf{n}\rJump } =0.
\end{align}
%%%%%%%%%%%
%%%%%%%%%%%
{Note that \eqref{eq:rotated_bc_0}  can be written as
\begin{align}\label{eq:rotated_bc_1}
S_xS_yS_z\left(\frac{1-r}{2}c\left(\sum_{\eta}\frac{n_\eta^2}{S_{\eta}}\right) \left(s\widehat{u}\right) + \frac{r+1}{2}\frac{1}{\rho} \sum_{\eta}\frac{n_\eta}{S_{\eta}^2}\frac{\partial \widehat{u}}{\partial \eta}\right) =0,
\end{align}
and since $S_xS_yS_z \ne 0$ we must also have
\begin{align}\label{eq:rotated_bc_2}
\left(\frac{1-r}{2}c\left(\sum_{\eta}\frac{n_\eta^2}{S_{\eta}}\right) \left(s\widehat{u}\right) + \frac{r+1}{2}\frac{1}{\rho} \sum_{\eta}\frac{n_\eta}{S_{\eta}^2}\frac{\partial \widehat{u}}{\partial \eta}\right) =0,
\end{align}
}
Similarly the second part of \eqref{eq:rotated_IC_0}  can be written as
\begin{align}\label{eq:rotated_IC_1}
{\lJump\left(\frac{1}{\rho} 
S_xS_yS_z 
\sum_{\eta}\frac{n_\eta}{S_{\eta}^2}\frac{\partial \widehat{u}}{\partial \eta}\right)
\rJump } =0.
\end{align}
Corresponding to \eqref{eq:rotated_bc_0} and \eqref{eq:rotated_IC_0} we have  relations in physical space  in terms of the solution and the auxilary variables. The corresponding boundary  condition is
\begin{equation}\label{eq:general_bc_pml}
c\frac{(1-r)}{2} \left(\dot{u} + \boldsymbol{\theta} \cdot \mathbf{n}\right) + \frac{(1+r)}{2}  \left(\frac{1}{\rho}\grad{u} + \boldsymbol{\phi}\right)\cdot \mathbf{n} =0, \quad (x, y, z) \in \partial{\Omega},
\end{equation}

and the corresponding interface conditions are
\begin{equation}\label{eq:interface_condition_pml}
{\left\lJump  {u}  \right\rJump } =0, \quad   {\lJump\left(\frac{1}{\rho}\grad{u} + \boldsymbol{\phi} \right)\cdot \mathbf{n}\rJump }=0,
\end{equation}
where

$$
\boldsymbol{\theta} 
= \begin{pmatrix}
\left((d_y+d_z) u + d_yd_z \psi\right)n_x\\
\left((d_x+d_z) u + d_xd_z \psi\right)n_y\\
\left((d_x+d_y) u + d_xd_y \psi\right)n_z
\end{pmatrix}.
$$
%%%
The conditions \eqref{eq:general_bc_pml} and \eqref{eq:interface_condition_pml} are non-trivial modifications of \eqref{eq:general_bc} and \eqref{eq:interface_condition} respectively.
 The modifications are chosen to ensure the energy estimate, and to allow for energy dissipation at the boundary.
%%%%
Note that $r=-1$  corresponds to the Dirichlet boundary condition
$$
  {u} =0 \implies \dot{u} + \boldsymbol{\theta} \cdot \mathbf{n} =0, \quad (x, y, z) \in \partial{\Omega},
$$
and $r=1$  corresponds to the Neumann boundary condition
$$
 \left(\frac{1}{\rho}\grad{u}+ \boldsymbol{\phi}\right)\cdot \mathbf{n} =0, \quad (x, y, z) \in \partial{\Omega}.
$$

\section{Energy analysis for the PML
problem}\label{sec:laplace}
In this section we will derive the energy estimate for the PML problem in Laplace space.
Consider the inhomogeneaus system

\begin{subequations}
\label{eq:pml_system_3D_source}
\begin{alignat}{5}
\frac{1}{\kappa}\ddot{u} + \frac{1}{\kappa}\sum_{\eta} d_\eta\dot{u} + \frac{d_xd_yd_z}{\kappa}\sum_{\eta}\frac{1}{d_\eta} u  
&=  
\div\left(\frac{1}{\rho} \grad {u}\right) + \div{\boldsymbol{\phi}}  -\frac{d_xd_yd_z}{\kappa} \psi + \frac{1}{\kappa}F_{u},\label{eq:pml_1_3D_source} \\ 
%%%%%
  \dot{\boldsymbol{\phi}} &= -\mathbf{d} \boldsymbol{\phi}+\frac{1}{\rho}\boldsymbol{\Gamma} \grad{u}+\frac{1}{\rho} \boldsymbol{\Upsilon} \grad{\psi}+ \mathbf{f}_{\phi}, 
  \label{eq:pml_2_3D_source}
  \\
  %%%%%%%%
		\dot{\psi} &= u+ f_{\psi},
		\label{eq:pml_3_3D_source}
\end{alignat}
\end{subequations}
with vanishing initial data and subject to the boundary and interface conditions \eqref{eq:general_bc_pml} and \eqref{eq:interface_condition_pml}. Problems with homogeneous initial and boundary data can be reformulated as \eqref{eq:pml_system_3D_source} by a suitable change of variables.

Next we Laplace transform \eqref{eq:pml_system_3D_source} in time and get
\begin{subequations}
\label{eq:pml_system_3D}
\begin{alignat}{5}
\frac{s^2}{\kappa} S_xS_yS_z \widehat{u}  
&=  
\div\left(\frac{1}{\rho} \grad {\widehat{u}}\right) + \div{\widehat{\boldsymbol{\phi}}}  %
+ \frac{1}{\kappa}\left(\widehat{F}_u + \frac{d_xd_yd_z}{s}  \widehat{f}_\psi\right), 
\label{eq:pml_1_3D_laplace_pml_source}
\\
%%%%%
  s\widehat{\boldsymbol{\phi}} &= -\mathbf{d} \widehat{\boldsymbol{\phi}}+\frac{1}{\rho}\boldsymbol{\Gamma} \grad{\widehat{u}}+\frac{1}{\rho} \boldsymbol{\Upsilon} \grad{\widehat{\psi}} + \widehat{\mathbf{f}}_{\phi}, 
  \label{eq:pml_2_3D_laplace_pml_source}
  \\
  %%%%%%%%
		  s\widehat{{\psi}} &= \widehat{u} + \widehat{f}_\psi, \label{eq:pml_3_3D_laplace_pml_source}
\end{alignat}
\end{subequations}

Using \eqref{eq:pml_2_3D_laplace_pml_source}-\eqref{eq:pml_3_3D_laplace_pml_source} and  eliminating the auxiliary variables $\widehat{\boldsymbol{\phi}}, \widehat{{\psi}}$ in \eqref{eq:pml_1_3D_laplace_pml_source} yields
\begin{equation} \label{eq:reduced_laplace_forcing}
\frac{s^2 S_x S_yS_z}{\kappa}\widehat{u}   - \div\left(\mathbf{S}  \frac{1}{\rho} \grad_d {\widehat{u}}\right) = \frac{S_xS_yS_z}{\kappa}\widehat{F} + S_xS_yS_z\div\widehat{\mathbf{f}},
\end{equation}
with the boundary condition

\begin{align}\label{eq:general_bc_laplace_pml_2}
S_x S_yS_z\left(\frac{1-r}{2}c\left(\sum_{\eta}\frac{n_\eta^2}{S_{\eta}}\right) \left(s\widehat{u}\right) + \frac{r+1}{2}\frac{1}{\rho} \sum_{\eta}\frac{n_\eta}{S_{\eta}^2}\frac{\partial \widehat{u}}{\partial \eta}\right) =0,
\end{align}
%%%
and the interface condition

\begin{equation} \label{eq:interface_condition_laplace_pml_2}
{\left\lJump  \widehat{u}  \right\rJump } =0, \quad    {\lJump\left(\frac{1}{\rho} \mathbf{S}\grad_d \widehat{u}\right)\cdot \mathbf{n}\rJump } =0.
\end{equation}
Here
$$
\widehat{F} = \frac{1}{S_xS_yS_z}\left(\widehat{F}_u + \frac{d_xd_yd_z}{s}  \widehat{f}_\psi\right),
$$
$$
\quad S_xS_yS_z\div\widehat{\mathbf{f}} =  \div\widehat{\mathbf{f}}_g,
$$
and
$$
\widehat{\mathbf{f}}_g =\left(sI + \mathbf{d}\right)^{-1} \left(\frac{1}{s\rho} \boldsymbol{\Upsilon}\grad{\widehat{f}_\psi} + \widehat{\mathbf{f}}_{\phi}\right).
$$
\begin{remark}
In realistic settings we typically have $\mathbf{f}_{\phi}=f_{\psi} = 0$. However, for generality, we choose to include them in the  analysis in this section.
\end{remark}

In \cite{duru2019energy}, an acoustic first-order system with a surrounding PML is considered. After eliminating the velocity field the pressure is shown to satisfy \eqref{eq:reduced_laplace_forcing}-\eqref{eq:interface_condition_laplace_pml_2}. This means that we can take advantage of the analysis there.
Following \cite{duru2019energy}, we note that for $s =a+ib\neq 0$, $a>0,$ and $d_{\eta} \geq 0$, with $ \eta = x,y,z$, we have
\begin{equation}\label{eq:real}
\textrm{Re}\left(\frac{(sS_\eta)^*}{S_{\eta}}\right) = a + k_{\eta}, \qquad k_{\eta} = \frac{2d_{\eta}b^2}{\lvert sS_{\eta}\rvert ^2}\geq 0.
\end{equation}
Next define an energy for the solution in Laplace space
\begin{equation}\label{eq:Eu}
\begin{split}
E_{\widehat{u}}^2(s) &= \left(\frac{1}{\kappa} s\widehat{u},s\widehat{u}\right) + \sum_{\eta = x,y,z}\left(\frac{1}{\rho}\frac{1}{S_{\eta}} \pdd{\widehat{u}}{\eta},\frac{1}{S_{\eta}}\pdd{\widehat{u}}{\eta}\right) \\
 & = \left\|s\widehat{u}\right\|_{1/\kappa}^2 +\sum_{\eta = x,y,z}\left\|\frac{1}{S_{\eta}}\pdd{\widehat{u}}{\eta}\right\|^2_{1/\rho}\geq0,
\end{split}
\end{equation}

and an energy for the data
\begin{equation}\label{eq:Ef}
\begin{split}
E_f^2(s) &= \left(\frac{1}{\kappa}\widehat{F},\widehat{F}\right) + (\rho\mathbf{\widehat{f}},\mathbf{\widehat{f}})  \\
 & = \left\|\widehat{F}\right\|_{1/\kappa}^2+\left\|\mathbf{\widehat{f}}\right\|_{\rho}^2 >0,
\end{split}
\end{equation}
with 
$$
\mathbf{\widehat{f}} = \left({sS_y}\widehat{f}_x, {sS_y}\widehat{f}_y, {sS_z}\widehat{f}_z\right)^T.
$$

The following theorem gives an energy estimate in Laplace space. It is similar to Theorem 1 in \cite{duru2019energy}, but more general  since it allows continuously varying damping.

\begin{theorem}\label{thm:energy_continuous_continuous_damping}
Consider the Laplace transformed PML equation \eqref{eq:reduced_laplace_forcing} in a piece-wise continuous heterogeneous media with continuously varying damping coefficients $d_x, d_y,d_z \geq0$, subject to   boundary condition \eqref{eq:general_bc_laplace_pml_2}, $\lvert r\rvert \le 1$, and interface conditions \eqref{eq:interface_condition_laplace_pml_2} at discontinuities in  material parameters. Then for any $s\in\mathbb{C}$ with $\textrm{Re}(s) = a>\beta$, where $\beta =\max_{\eta = x, y, z,\Omega}\left(\sqrt{\kappa/\rho}{\lvert d^{\prime}_\eta(\eta)/d_{\eta}\rvert} \right)$,  a solution satisfies
\begin{align}\label{eq:energy_est}
(a-\beta) E_{\widehat{u}}^2(s) + \sum_{\eta = x,y,z}\left\|\frac{1}{S_{\eta}}\pdd{{\widehat{u}}}{\eta}\right\|_{k_{\eta}/\rho}^2 + BT_r(\widehat{u}) \leq 2 E_{\widehat{u}}(s)E_f(s),
\end{align}
where 

$$
BT_r(\widehat{u}) = 
  \begin{cases}
  0, \qquad r =-1\\
  \int_{\partial \Omega}\left(\frac{1-r}{1+r}c\lvert(s\widehat{u})\rvert^2\sum_{\eta = x,y,z}\frac{a(a+d_\eta)+ b^2 }{(a+d_{\eta})^2 + b^2}n_{\eta}^2\right) dS \ge 0,\quad r \ne -1,
  \end{cases}
  $$

$E_u$ and $E_f$ are defined in \eqref{eq:Eu} and \eqref{eq:Ef}, $n_\eta$ are the  components of the normal, and $b=Im(s)$.
\end{theorem}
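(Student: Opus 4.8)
The plan is to run the standard energy method directly in Laplace space on the reduced equation \eqref{eq:reduced_laplace_forcing}, choosing a multiplier tailored to the coordinate stretching so that the complex weights $S_\eta$ collapse into the clean energy \eqref{eq:Eu}. Concretely, I would multiply \eqref{eq:reduced_laplace_forcing} by the conjugated multiplier $w^\ast = (s\widehat{u})^\ast/(S_x S_y S_z)$ and integrate over $\Omega$. The division by $S_x S_y S_z$ is the decisive choice: it exactly cancels the factor $S_x S_y S_z$ in the mass term, so that taking real parts and using $\mathrm{Re}(s|s|^2)=a|s|^2$ produces $a\|s\widehat{u}\|^2_{1/\kappa}$, the first contribution to $aE_{\widehat{u}}^2$.

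Next I would integrate the second-order term by parts. Writing the $\eta$-component of $\mathbf{S}\frac{1}{\rho}\grad_d\widehat{u}$ as $\frac{1}{\rho}\frac{S_x S_y S_z}{S_\eta^2}\partial_\eta\widehat{u}$ and differentiating $w^\ast$, the bulk term splits into two pieces. In the first piece the $S_x S_y S_z$ cancels and, after the rearrangement $\frac{s^\ast}{S_\eta^2}=\frac{(sS_\eta)^\ast}{S_\eta}\frac{1}{|S_\eta|^2}$, the identity \eqref{eq:real} gives real part $\sum_\eta \frac{a+k_\eta}{\rho}|\frac{1}{S_\eta}\partial_\eta\widehat{u}|^2$ --- precisely the diffusion part of $aE_{\widehat{u}}^2$ together with the non-negative terms $\sum_\eta\|\frac{1}{S_\eta}\partial_\eta\widehat{u}\|^2_{k_\eta/\rho}$ that stay on the left of the estimate. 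The second piece comes from differentiating $1/(S_xS_yS_z)$; since $\partial_\eta S_\eta=d'_\eta/s$, it produces a cross term $T_{\mathrm{bad}}=-\sum_\eta\int\frac{1}{\rho}\frac{s\, d'_\eta}{(s+d_\eta)^2}(\frac{1}{S_\eta}\partial_\eta\widehat{u})(s\widehat{u})^\ast$, which is absent in the constant-damping setting of \cite{duru2019energy}.

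The boundary and interface terms I would dispatch using \eqref{eq:general_bc_laplace_pml_2} and \eqref{eq:interface_condition_laplace_pml_2}. For $r\neq-1$ I solve the boundary condition \eqref{eq:rotated_bc_2} for the normal flux $\frac{1}{\rho}\sum_\eta\frac{n_\eta}{S_\eta^2}\partial_\eta\widehat{u}$ and substitute; the surface integrand becomes $\frac{1-r}{1+r}c|s\widehat{u}|^2\sum_\eta \frac{n_\eta^2}{S_\eta}$, whose real part is $BT_r(\widehat{u})$ once $\mathrm{Re}(1/S_\eta)=\frac{a(a+d_\eta)+b^2}{(a+d_\eta)^2+b^2}\geq0$ is inserted; for $r=-1$ the Dirichlet condition kills $s\widehat{u}$ on $\partial\Omega$. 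At material interfaces, continuity of $\widehat{u}$ and of the damping (hence of $S_xS_yS_z$ and of $w^\ast$) together with continuity of the normal flux \eqref{eq:interface_condition_laplace_pml_2} make the jump contributions vanish. The forcing on the right, after one integration by parts on the $\div\widehat{\mathbf{f}}$ term (rewriting $\partial_\eta\widehat{u}^\ast=(sS_\eta)^\ast(\frac{1}{S_\eta}\partial_\eta\widehat{u})^\ast$ to expose the stretched gradients defining \eqref{eq:Ef}), is controlled by Cauchy--Schwarz to give $2E_{\widehat{u}}(s)E_f(s)$.

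The main obstacle is bounding $T_{\mathrm{bad}}$ by $\beta E_{\widehat{u}}^2$, and this is where the constant $\beta$ is born. The plan is to factor the coefficient as $\frac{|s||d'_\eta|}{|s+d_\eta|^2}=\frac{|d'_\eta|}{d_\eta}\cdot\frac{|s|d_\eta}{|s+d_\eta|^2}$ and to prove the sharp pointwise bound $\frac{|s|d_\eta}{|s+d_\eta|^2}\le1$ (an AM--GM inequality, since $(a+d_\eta)^2+b^2\ge a^2+b^2+d_\eta^2\ge 2|s|d_\eta$). A weighted Young inequality with the balancing factor $c=\sqrt{\kappa/\rho}$ then converts $|T_{\mathrm{bad}}|$ into a combination of $\|s\widehat{u}\|^2_{1/\kappa}$ and $\sum_\eta\|\frac{1}{S_\eta}\partial_\eta\widehat{u}\|^2_{1/\rho}$ with coefficient at most $\beta=\max_{\eta,\Omega} c|d'_\eta/d_\eta|$ (choosing the Young parameter in $[\tfrac12,\tfrac23]$ to absorb the threefold count of the mass term). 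Collecting the real parts of all contributions and moving $T_{\mathrm{bad}}$ to the right yields $(a-\beta)E_{\widehat{u}}^2+\sum_\eta\|\frac{1}{S_\eta}\partial_\eta\widehat{u}\|^2_{k_\eta/\rho}+BT_r(\widehat{u})\le 2E_{\widehat{u}}E_f$, which is the claim; the hypothesis $a>\beta$ is exactly what keeps the leading coefficient positive.
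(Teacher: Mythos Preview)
Your proposal is correct and follows essentially the same route as the paper: the same multiplier $(s\widehat{u})^*/(S_xS_yS_z)$, the same integration by parts producing the extra cross term from $\partial_\eta(1/S_\eta)$, the same treatment of boundary and interface contributions via \eqref{eq:general_bc_laplace_pml_2}--\eqref{eq:interface_condition_laplace_pml_2}, and the same closing step of adding the complex conjugate and applying Cauchy--Schwarz with $|(sS_\eta)^*/(sS_\eta)|=1$. Your AM--GM verification that $|s|d_\eta/|s+d_\eta|^2\le 1$ is precisely the pointwise inequality the paper records as $|\partial_\eta(1/S_\eta)|\le |d'_\eta/d_\eta|$, and your $\sqrt{\kappa/\rho}$-balanced Young step makes explicit how the constant $\beta$ arises, which the paper leaves implicit.
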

\begin{proof}
%({\bf Check the comments about d!}) 
We multiply \eqref{eq:reduced_laplace_forcing} with $(s\widehat{u})^*/(S_xS_yS_z)$ from the left and integrate over our domain $\Omega$, we have

%%%%
\begin{align*}
\left(\frac{s}{\kappa}\left(s\widehat{u}\right), s\widehat{u}\right) - \sum_{\eta = x,y,z}\left(\frac{1}{ S_{\eta}}\pdd{ }{\eta}\left(\frac{1}{\rho S_{\eta}}\pdd{\widehat{u}}{\eta}\right), s\widehat{u}\right)
=
\left(\frac{1}{\kappa}\widehat{F}, s\widehat{u}\right)
+
\left(\div\widehat{\mathbf{f}}, s\widehat{u}\right)
\end{align*}
%%%
Integrating by part gives us
%%%%
\begin{align*}
&\left(\frac{s}{\kappa}\left(s\widehat{u}\right), s\widehat{u}\right) 
+ 
\sum_{\eta = x,y,z}\left(\frac{(sS_{\eta})^*}{\rho S_{\eta}}\left(\frac{1}{S_{\eta}}\pdd{\widehat{u}}{\eta}\right), \frac{1}{S_{\eta}}\pdd{\widehat{u}}{\eta}\right)\\
&+
\sum_{\eta = x,y,z}\left(\pdd{}{\eta}\left(\frac{1}{S_{\eta}}\right)\left(\frac{1}{\rho S_{\eta}}\pdd{\widehat{u}}{\eta}\right),s\widehat{u}\right)\\
&-
\sum_{\eta = x,y,z}\bsprod{\frac{1}{\rho}\left(\frac{1}{S_{\eta}^2} \pdd{\widehat{u}}{\eta}n_{\eta}\right)}{s\widehat{u}}
=
\left(\frac{1}{\kappa}\widehat{F}, s\widehat{u}\right)
+
\left(\div\widehat{\mathbf{f}}, s\widehat{u}\right),
\end{align*}
%%%
where $n_{\eta}$ is the $\eta$ component of the normal. Note that the last term in the first row of the above vanishes if the damping is constant in space.
At discontinuities in  $\rho$ the interface terms vanishes due to the interface conditions \eqref{eq:rotated_IC_0}, when we take into account that the damping is continous. At the boundaries we enforce the boundary conditions \eqref{eq:rotated_bc_0}. Note that if $r = -1$ we have $(s\widehat{u}) = 0$ and the boundary integral vanishes. And if $r \ne -1$ we enforce the boundary conditions and we have

%%%
\begin{align*}
&\left(\frac{s}{\kappa}\left(s\widehat{u}\right), s\widehat{u}\right) 
+ 
\sum_{\eta = x,y,z}\left(\frac{(sS_{\eta})^*}{\rho S_{\eta}}\left(\frac{1}{S_{\eta}}\pdd{\widehat{u}}{\eta}\right), \frac{1}{S_{\eta}}\pdd{\widehat{u}}{\eta}\right)
+
\sum_{\eta = x,y,z}\left(\pdd{}{\eta}\left(\frac{1}{S_{\eta}}\right)\left(\frac{1}{\rho S_{\eta}}\pdd{\widehat{u}}{\eta}\right), s\widehat{u}\right)\\
&+
\sum_{\eta = x,y,z}\bsprod{\frac{1-r}{1+r}c\frac{n_{\eta}^2}{S_{\eta}}\left(s\widehat{u}\right)}{s\widehat{u}}
=
\left(\frac{1}{\kappa}\widehat{F}, s\widehat{u}\right)
+
\left(\div\widehat{\mathbf{f}}, s\widehat{u}\right)
\end{align*}
%%%
%%%
Note also  that the boundary integral vanishes when $r = 1$.  %If $d$ is only piece wise constant the integration by parts will also give rise to an interface integral, but this term vanish due to the interface conditions.
Adding the conjugate transpose yields

%%%
\begin{align*}
&2(a-\beta)\left(\left(\frac{1}{\kappa}\left(s\widehat{u}\right), s\widehat{u}\right) 
+ 
\sum_{\eta = x,y,z}\left(\frac{1}{\rho }\left(\frac{1}{S_{\eta}}\pdd{\widehat{u}}{\eta}\right), \frac{1}{S_{\eta}}\pdd{\widehat{u}}{\eta}\right)\right)\\
&+ 
2\sum_{\eta = x,y,z}\left(\frac{\kappa_\eta}{\rho }\left(\frac{1}{S_{\eta}}\pdd{\widehat{u}}{\eta}\right), \frac{1}{S_{\eta}}\pdd{\widehat{u}}{\eta}\right)
+
2\sum_{\eta = x,y,z}\bsprod{\frac{1-r}{1+r}c\frac{a(a+d_\eta)+ b^2 }{(a+d_{\eta})^2 + b^2}n_{\eta}^2\left(s\widehat{u}\right)}{s\widehat{u}}\\
&\leq
\left(\frac{1}{\kappa}\widehat{F}, s\widehat{u}\right)
+
\left(s\widehat{u}, \frac{1}{\kappa}\widehat{F}\right)
-
\left(\widehat{\mathbf{f}}, s\grad\widehat{u}\right)
-
\left(s\grad\widehat{u}, \widehat{\mathbf{f}}\right)
\end{align*}
%%%
where $\beta$ satisfies
$$
\beta =\max_{\eta = x, y, z,\Omega}\left(\sqrt{\kappa/\rho}{\lvert d^{\prime}_\eta(\eta)/d_{\eta}\rvert} \right).
$$
Here we have used
$$\left\lvert\pdd{}{\eta}\left(\frac{1}{S_{\eta}}\right) \right\rvert \le {\left\lvert\frac{d^{\prime}_\eta(\eta)}{d_{\eta}}\right\rvert}
$$
Using Cauchy Schwarz inequality and
$$
\left\lvert \frac{\left(sS_\eta\right)^*}{sS_\eta}\right\rvert = 1,
$$
 give us the desired estimate, which   completes the proof.
\end{proof}
\begin{remark}
In particular, constant damping coefficients gives us $\beta = 0$ and the proof holds for any $a>0$.
\end{remark}

\begin{corollary}\label{cor:energy}
Consider the problem defined in Theorem \ref{thm:energy_continuous_continuous_damping} with  constant damping $d_x,d_y,d_z \geq 0$. The problem is asymptotically stable in the sense that no exponentially growing solutions are supported.
\end{corollary}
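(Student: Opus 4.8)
The plan is to read off the claim directly from the Laplace-space energy estimate of Theorem \ref{thm:energy_continuous_continuous_damping}, specialized to constant damping. By the remark following the theorem, constant $d_x, d_y, d_z$ give $\beta = 0$, so the estimate \eqref{eq:energy_est} is valid for every $s$ with $\textrm{Re}(s) = a > 0$. An exponentially growing solution of the time-domain PML system \eqref{eq:pml_system_3D} is, by definition, a nontrivial separable mode $u(\mathbf{x}, t) = e^{st}\Phi(\mathbf{x})$ (with corresponding auxiliary variables) for some $s$ with $\textrm{Re}(s) > 0$. Such a mode corresponds to a nontrivial solution $\widehat{u}$ of the \emph{homogeneous} resolvent equation \eqref{eq:reduced_laplace_forcing} --- that is, with vanishing data $\widehat{F} = 0$ and $\widehat{\mathbf{f}} = 0$ --- subject to the boundary and interface conditions \eqref{eq:general_bc_laplace_pml_2} and \eqref{eq:interface_condition_laplace_pml_2}. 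So the first step is to make this correspondence precise: ruling out such eigenvalues $s$ in the open right half-plane is exactly the statement that no exponentially growing solution is supported.

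Next I would apply Theorem \ref{thm:energy_continuous_continuous_damping} to this homogeneous mode. With $\widehat{F} = 0$ and $\widehat{\mathbf{f}} = 0$ we have $E_f(s) = 0$, so the right-hand side of \eqref{eq:energy_est} vanishes and the estimate reduces to
\begin{equation*}
a\, E_{\widehat{u}}^2(s) + \sum_{\eta = x,y,z}\left\|\frac{1}{S_\eta}\pdd{\widehat{u}}{\eta}\right\|_{k_\eta/\rho}^2 + BT_r(\widehat{u}) \le 0.
\end{equation*}
Every term on the left is non-negative: $E_{\widehat{u}}^2 \ge 0$ by \eqref{eq:Eu}, the weighted seminorms are non-negative since $k_\eta \ge 0$ by \eqref{eq:real}, and $BT_r(\widehat{u}) \ge 0$ as established in the theorem. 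Since $a > 0$, this forces $E_{\widehat{u}}^2(s) = 0$. In particular the first contribution $\|s\widehat{u}\|_{1/\kappa}^2$ vanishes, and because $s \ne 0$ (indeed $\textrm{Re}(s) = a > 0$) and $1/\kappa > 0$, we conclude $\widehat{u} \equiv 0$. Hence no nontrivial mode with $\textrm{Re}(s) > 0$ exists, which is the desired conclusion.

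The computations here are immediate; the only genuine point requiring care --- and thus the main obstacle --- is the first step, namely making precise the passage from ``exponentially growing time-domain solution'' to ``nontrivial solution of the homogeneous resolvent problem in the open right half-plane.'' One must justify that an exponentially growing solution indeed produces a pole of the resolvent at some $s$ with $\textrm{Re}(s) > 0$ and that the associated eigenfunction satisfies exactly the homogeneous version of \eqref{eq:reduced_laplace_forcing} with the stated boundary and interface conditions, so that Theorem \ref{thm:energy_continuous_continuous_damping} applies with $E_f = 0$. Once this correspondence is in place, the energy estimate closes the argument uniformly for all $a > 0$ arbitrarily close to the imaginary axis, leaving no room for eigenvalues in the open right half-plane.
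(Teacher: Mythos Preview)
Your argument is correct and is precisely the reasoning the paper has in mind: the corollary is stated without an explicit proof, as an immediate consequence of Theorem~\ref{thm:energy_continuous_continuous_damping} together with the remark that constant damping gives $\beta=0$. Your unpacking---that the estimate \eqref{eq:energy_est} with $E_f=0$ and any $a>0$ forces $E_{\widehat u}=0$, hence $\widehat u\equiv 0$, so the homogeneous resolvent problem has no nontrivial solution in the open right half-plane---is exactly the intended justification.
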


Transforming our estimate in Theorem \ref{thm:energy_continuous_continuous_damping} from Laplace domain into time domain is hard to do while keeping the sharpness of the bound.
However we can straight-forwardly achieve a bound that increases exponentially with time.
This does not prove asymptotic stability but it is enough to assure well-posedness of the PML problem.
Let $\mathcal{L}^{-1}$ denote the inverse Laplace transform and introduce the following norms
\begin{equation}\label{eq:Eu_inv}
\begin{split}
E_u^2(t,d_{\eta}) &= \left\|\mathcal{L}^{-1}(s\widetilde{u})\right\|^2_{1/\kappa} + \sum_{\eta = x,y,z}\left\|\mathcal{L}^{-1}\left(\frac{1}{S_{\eta}}\pdd{\widetilde{u}}{\eta}\right)\right\|^2_{1/\rho}\\
&=\left\|\pdd{u}{t}\right\|^2_{1/\kappa} +  \sum_{\eta = x,y,z}\left\|\pdd{u}{\eta}-d_{\eta}e^{-d_{\eta}t}\pdd{u}{\eta}\right\|^2_{1/\rho}>0,
\end{split}
\end{equation}
and
\begin{equation}\label{eq:Ef_inv}
\begin{split}
E_f^2(t, d_\eta)  = \left\|\mathcal{L}^{-1}\left(\widehat{F}\right)\right\|_{1/\kappa}^2+\left\|\mathcal{L}^{-1}\left(\mathbf{\widehat{f}}\right)\right\|_{\rho}^2 >0.
\end{split}
\end{equation}

\begin{theorem}\label{thm:energy_continuous_2}
Consider the Laplace transformed PML equation \eqref{eq:reduced_laplace} with  constant damping $d_{\eta}\geq 0$ and subject to homogeneous initial data. Let the energy norms $E_u^2(t,d_{\eta})>0$, $E_f^2(t,d_{\eta})>0$ be defined as \eqref{eq:Eu_inv} and \eqref{eq:Ef_inv}. Then
\begin{equation}\label{eq:time_estimate}
\int_0^Te^{-at}E_u^2(t,d_{\eta})dt\leq \frac{4}{(a-\beta)^2}\int_0^Te^{-2at}E_f^2(t,d_{\eta})dt
\end{equation}
for any $a>\beta $ and $ T>0.$

\end{theorem}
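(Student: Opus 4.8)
The plan is to convert the frequency-domain estimate of Theorem~\ref{thm:energy_continuous_continuous_damping} into a time-domain bound by integrating it along the Bromwich line $\mathrm{Re}(s)=a$ and invoking a Parseval (Plancherel) identity for the Laplace transform. First I would discard the two manifestly nonnegative contributions in \eqref{eq:energy_est}, namely the damping sum $\sum_\eta\|\tfrac{1}{S_\eta}\pdd{\widehat u}{\eta}\|^2_{k_\eta/\rho}$ and the boundary term $BT_r(\widehat u)$ (nonnegative because $|r|\le1$). What remains is $(a-\beta)E_{\widehat u}^2(s)\le 2E_{\widehat u}(s)E_f(s)$; dividing by $E_{\widehat u}(s)$ (the resulting bound being trivial when it vanishes) and squaring yields the pointwise estimate
\[
E_{\widehat u}^2(a+ib)\le \frac{4}{(a-\beta)^2}\,E_f^2(a+ib),\qquad b\in\mathbb{R},
\]
on the whole vertical line with $a>\beta$ fixed; for constant damping $\beta=0$, so every $a>0$ is admissible.

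Next I would integrate this inequality in $b$ over $\mathbb{R}$. Each side is a weighted spatial $L^2$-norm, so by Fubini the order of the $\Omega$-integration and the $b$-integration may be exchanged. For fixed $\mathbf{x}$ and each component the map $b\mapsto\widehat{(\cdot)}(a+ib)$ is the Fourier transform in $b$ of the exponentially weighted inverse transform $t\mapsto e^{-at}\mathcal{L}^{-1}(\cdot)(t)$, so Parseval's identity gives, for example,
\[
\int_{-\infty}^{\infty}\bigl|s\widehat u(a+ib)\bigr|^2\,db
= 2\pi\int_0^{\infty}e^{-2at}\bigl|\mathcal{L}^{-1}(s\widehat u)(t)\bigr|^2\,dt,
\]
and similarly for the gradient components $\tfrac{1}{S_\eta}\pdd{\widehat u}{\eta}$ and for the data. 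Reassembling the weighted norms, the two contour integrals become exactly $2\pi\int_0^\infty e^{-2at}E_u^2(t,d_\eta)\,dt$ and $2\pi\int_0^\infty e^{-2at}E_f^2(t,d_\eta)\,dt$, with $E_u^2$, $E_f^2$ as defined through $\mathcal{L}^{-1}$ in \eqref{eq:Eu_inv}--\eqref{eq:Ef_inv}. The common factor $2\pi$ cancels, and truncating the time integral on the left to $(0,T)$ only decreases it, which yields the time-domain estimate \eqref{eq:time_estimate}.

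The main obstacle is making the Parseval/Fubini step rigorous. I must verify that along the line $\mathrm{Re}(s)=a>\beta$ the quantities $s\widehat u$ and $\tfrac{1}{S_\eta}\pdd{\widehat u}{\eta}$ are square-integrable in $b$, so that their inverse transforms exist as $L^2(0,\infty)$ functions with finite $e^{-2at}$-weighted norm and the repeated integrals converge absolutely (licensing the interchange of $\int_\Omega$ and $\int_{\mathbb{R}}\,db$). This is exactly where $a>\beta$ enters: the pointwise frequency bound together with $E_f(a+i\cdot)\in L^2(\mathbb{R})$ forces $E_{\widehat u}^2(a+i\cdot)\in L^1(\mathbb{R})$, which is the integrability needed to apply Parseval and close the argument. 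Once this is in place the remaining bookkeeping — the weight produced by Parseval and the passage to the finite horizon $(0,T)$ — is routine.
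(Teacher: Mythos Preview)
The paper does not actually prove this theorem; it defers entirely to \cite{duru2019energy}. Your argument---drop the nonnegative terms in \eqref{eq:energy_est}, square the resulting pointwise bound, integrate along the line $\mathrm{Re}(s)=a$, and apply Parseval for the Laplace transform---is precisely the standard route and is what one finds in that reference, so the approach is correct.

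Two minor points are worth flagging. First, Parseval produces the weight $e^{-2at}$ on \emph{both} sides, whereas \eqref{eq:time_estimate} as printed carries $e^{-at}$ on the left; this is evidently a typo in the statement, and what your argument actually delivers is the version with $e^{-2at}$ throughout. Second, you truncate only the left-hand time integral to $(0,T)$, but the claimed inequality truncates both sides. To close this you need a causality step: replace the data by its restriction to $[0,T]$, which (by uniqueness for the forward evolution) leaves the solution on $[0,T]$ unchanged and turns the right-hand $\int_0^\infty$ into $\int_0^T$.
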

\begin{proof}
The proof can be found in \cite{duru2019energy}.
\end{proof}

From the results in this section we can conclude that our PML problem is well posed. In particular, for constant damping \eqref{eq:time_estimate} is valid for any $a>0$, and we can choose $a>0$ in relation to the time interval. A bound in physical space is obtained, which grows only algebraically in time. With variable damping the bound grows exponentially.
In section \ref{sec:discrete} we will see that we can follow the same approach to prove stability in the corresponding sense also for the discrete problem.

\section{The finite element approximation\label{sec:fem}}
In this section we formulate the PML problem in weak form, and state the corresponding FEM. 
For the weak form we will consider $u, \psi \in V$, where
$$
V =  
 \begin{cases}
H_0^1(\Omega),\quad r =-1,\\
H^1(\Omega),\quad r \ne -1,
\end{cases}
$$
and $\boldsymbol{\phi} \in [L^2(\Omega)]^3$.  Note that $u\in V$ is the standard choice for the wave equation without damping. In the PML $u\in V$ in \eqref{eq:pml_system_3D} implies $\boldsymbol{\phi} \in [L^2(\Omega)]^3$ and $\psi \in V$. Here $H_0^1(\Omega)$ and $H^1(\Omega)$ are the standard Sobolev spaces. 
%\todo{should comment on the spaces $H_0^1(\Omega)$ and $H^1(\Omega)$}
We begin by multiplying the PML system  in \eqref{eq:pml_system_3D} by test functions $v,q \in V $ and $\mathbf{p}\in [L^2(\Omega)]^3$ and integrating over the domain.
We obtain
\begin{subequations}
\label{eq:pml_system_3D_strong}
\begin{alignat}{3}
&\left(\frac{1}{\kappa}\ddot{u} + \frac{1}{\kappa}\sum_{\eta} d_\eta\dot{u} + \frac{d_xd_yd_z}{\kappa}\sum_{\eta}\frac{1}{d_\eta} u, v\right) \nonumber  \\
&=  
\left(\div\left(\frac{1}{\rho} \grad {u}\right) + \div{\boldsymbol{\phi}}  -\frac{d_xd_yd_z}{\kappa} \psi + \frac{1}{\kappa}F_{u}, v\right),\label{eq:pml_1_3D} \\ 
%%%%%
  &\left(\dot{\boldsymbol{\phi}}, \mathbf{p}\right) = \left(-\mathbf{d} \boldsymbol{\phi}+\frac{1}{\rho}\boldsymbol{\Gamma} \grad{u}+\frac{1}{\rho} \boldsymbol{\Upsilon} \grad{\psi} + \mathbf{f}_{\phi}, \mathbf{p}\right), 
  \label{eq:pml_2_3D}
  \\
  %%%%%%%%
		&\left(\dot{\psi}, q\right) = \left(u + f_{\psi}, q\right).\label{eq:pml_5_3D}
\end{alignat}
\end{subequations}
When $r=-1$ we have a Dirichlet boundary condition, which is imposed strongly in the function space for $u$ and $v$.
When $r\neq-1$ we need to consider the boundary condition multiplied by the test function $v$ and integrated over the boundary,
\begin{equation}\label{eq: weak_boundary}
\frac{1-r}{2}\bsprod{c\left(\dot{u}+\boldsymbol{\theta}\cdot \mathbf{n}\right)}{v}+\frac{1+r}{2}\bsprod{\frac{1}{\rho}\grad u\cdot \mathbf{n}+\boldsymbol{\phi}\cdot \mathbf{n}}{v}=0.
\end{equation}
%%%
Similarly, we will use the interface condition multiplied by the test function $v$, and integrated over the interface,
%%%
\begin{equation}\label{eq:weak_interface}
\bsprod{\lJump\frac{1}{\rho}\grad{u}\cdot \mathbf{n}\rJump  + {\lJump\boldsymbol{\phi} \cdot \mathbf{n}\rJump} }{ v}=0.
\end{equation}
Next we perform integration by parts on the right hand side of equation \eqref{eq:pml_1_3D}. If we take (\ref{eq:weak_interface}) into account the result is%{\bf include only the first equation}
%
%\begin{subequations}
%\label{eq:pml_system_3D_weak_form_0}
\begin{align}
&\left(\frac{1}{\kappa}\ddot{u} + \frac{1}{\kappa}\sum_{\eta} d_\eta\dot{u} + \frac{d_xd_yd_z}{\kappa}\sum_{\eta}\frac{1}{d_\eta} u, v\right)  
=  
-\left(\frac{1}{\rho} \grad {u}, \grad{v}\right) - \left({\boldsymbol{\phi}}, \grad{v}\right)
\nonumber
\\
&-\left(\frac{d_xd_yd_z}{\kappa} \psi, v\right) + \left(\frac{1}{\kappa}F_{u}, v\right) 
+ \bsprod{\frac{1}{\rho}\grad{u}\cdot \mathbf{n}+ \boldsymbol{\phi} \cdot \mathbf{n}}{ v},  \label{eq:pml_1_3D_0} \\ 
%%%%%
  &\left(\dot{\boldsymbol{\phi}}, \mathbf{p}\right) = \left(-\mathbf{d} \boldsymbol{\phi}+\frac{1}{\rho}\boldsymbol{\Gamma} \grad{u}+\frac{1}{\rho} \boldsymbol{\Upsilon} \grad{\psi} + \mathbf{f}_{\phi}, \mathbf{p}\right), 
  \label{eq:pml_2_3D_0}
  \\
  %%%%%%%%
  &\left(\dot{\psi}, q\right) = \left(u + f_{\psi}, q\right).\label{eq:pml_3_3D_0}
\end{align}
%\end{subequations}
When $r\neq-1$
we use (\ref{eq: weak_boundary}) to express the boundary term in $\dot{u}$ and $\theta$, and thus the boundary term can be written as
$$
% \left(\left(\frac{1}{\rho}\grad{u}\right)\cdot \mathbf{n}+ \boldsymbol{\phi} \cdot \mathbf{n}, v\right)_{\partial \Omega}.
BT_r(u,\boldsymbol{\theta},v):= 
  \begin{cases}
0, \qquad r =-1,\\
  \bsprod{c\frac{(1-r)}{r+1} \left(\dot{u} + \boldsymbol{\theta} \cdot \mathbf{n}\right)}{v},\quad r \ne -1,
  \end{cases}
$$

When $r=-1$ the term vanishes due to the strong Dirichlet condition imposed on the function space.

\begin{remark}
Note that both the first and second term of the right hand side of \eqref{eq:pml_1_3D} are integrated by parts.
This weak form will allow the discrete energy analysis in the next section to follow the steps in Section \ref{sec:laplace}. We also note that with this formulation the interface condition is imposed weakly.
\end{remark}
\begin{remark}
From experience we know that naively only integrating the first term of the right hand side of \eqref{eq:pml_1_3D} by parts, yields an unstable numerical method.
\end{remark}
The weak PML problem can now be stated.
For all $v, q \in V$ and $\boldsymbol{p} \in [L^2\left(\Omega\right)]^3$,  find $u, \psi \in V$ and $\boldsymbol{\phi} \in [L^2\left(\Omega\right)]^3$  such that
\begin{subequations}
\label{eq:pml_system_3D_weak_form}
\begin{alignat}{5}
&\left(\frac{1}{\kappa}\ddot{u} + \frac{1}{\kappa}\sum_{\eta} d_\eta\dot{u} + \frac{d_xd_yd_z}{\kappa}\sum_{\eta}\frac{1}{d_\eta} u, v\right)  
= -\left(\frac{1}{\rho} \grad {u}, \grad{v}\right)\nonumber \\ &- \left({\boldsymbol{\phi}}, \grad{v}\right)-\left(\frac{d_xd_yd_z}{\kappa} \psi, v\right) \label{eq:pml_1_3D_weak}- BT_r(u,\boldsymbol{\theta},v)+ \left(\frac{1}{\kappa}F_{u}, v\right)  \\ 
%%%%%
  &\left(\dot{\boldsymbol{\phi}}, \mathbf{p}\right) = \left(-\mathbf{d} \boldsymbol{\phi}+\frac{1}{\rho}\boldsymbol{\Gamma} \grad{u}+\frac{1}{\rho} \boldsymbol{\Upsilon} \grad{\psi} + \mathbf{f}_{\phi}, \mathbf{p}\right), 
  \label{eq:pml_2_3D_weak}
  \\
  %%%%%%%%
		&\left(\dot{\psi}, q\right) = \left(u + f_{\psi}, q\right).\label{eq:pml_3_3D_weak}
\end{alignat}
\end{subequations}
Note that for $|r| = 1$ the boundary term vanishes identically. Since we derived this weak form from the strong form consistency follows.

It remains to formulate the  Galerkin approximation. 

For this purpose we discretise our domain $\Omega$ by introducing a tesselation $K_{\Omega}$, with no hanging nodes, consisting of elements, $K$, that completely cover $\Omega$. More precisely
\begin{equation}
\Omega =\bigcup_{K\in K_{\Omega}} K.
\label{eq:mesh}
\end{equation}
Next we introduce our discrete spaces
\begin{equation}
V_h={\{v\in V : v\lvert_K \in N_p(K), \, K \in K_{\Omega} \}}
\label{eq:fem_V_space}
\end{equation}
and 
\begin{equation}\label{eq:fem_V_space}
W_h=\{ \mathbf{v}\in [L^2\left(\Omega\right)]^3 : \mathbf{v}\lvert_K \in N_p(K), \, K \in K_{\Omega} \}.
\end{equation}
where $N_p(K)$ denotes an element with polynomial order $p \geq 1$, over $K$.  

The Galerkin approximation for the PML problem can now be stated.
For all $v, q \in V_h $ and $\boldsymbol{p} \in W_h$,  find $u_h, \psi_h \in V_h$ and $\boldsymbol{\phi}_h \in W_h$  such that
\begin{subequations}
\label{eq:pml_system_3D_weak_form_disc}
\begin{alignat}{4}
\nonumber
&\left(\frac{1}{\kappa}\ddot{u}_h + \frac{1}{\kappa}\sum_{\eta} d_\eta\dot{u}_h + \frac{d_xd_yd_z}{\kappa}\sum_{\eta}\frac{1}{d_\eta} u_h, v\right)  
= -\left(\frac{1}{\rho} \grad {u}_h, \grad{v}\right)\\ &- \left({\boldsymbol{\phi}}_h, \grad{v}\right)-\left(\frac{d_xd_yd_z}{\kappa} \psi_h, v\right) \label{eq:pml_1_3D_weak_disc}
- BT_r(u_h,\boldsymbol{\theta}_h,v)+ \left(\frac{1}{\kappa}F_{u}, v\right)  \\ 
%%%%%
  &\left(\dot{\boldsymbol{\phi}}_h, \mathbf{p}\right) = \left(-\mathbf{d} \boldsymbol{\phi}_h+\frac{1}{\rho}\boldsymbol{\Gamma} \grad{u}_h+\frac{1}{\rho} \boldsymbol{\Upsilon} \grad{\psi}_h + \mathbf{f}_{\phi}, \mathbf{p}\right), 
  \label{eq:pml_2_3D_weak_disc}
  \\
  %%%%%%%%
		&\left(\dot{\psi}_h, q\right) = \left(u_h + f_{\psi}, q\right).\label{eq:pml_3_3D_weak_disc}
\end{alignat}
\end{subequations}

The initial data \eqref{eq:u0} is $L^2$-projected onto the finite element space.

The numerical experiments will be performed in 2D. There the approximation reads as in 3D, but the number of auxiliary variables is reduced to two, corresponding to the first two components of $\phi$ in the 3D case.  
%\section{Discrete energy 2D\label{sec:discrete}}
%\input{DiscreteEnergy2D.tex}
\section{Discrete Analysis\label{sec:discrete}}
 In this section we derive energy and convergence results for particular cases. The energy result is quite general and holds for continuously varying damping coefficient, while the convergence proof is more restrictive and relies on constant damping coefficients. For simplicity we will consider Dirichlet boundary condition, but the analysis can easily be extended to the more general boundary conditions.

\subsection{Discrete energy analysis}
Here, we derive an energy estimate for the discrete problem with a particular choice of quadrature, which corresponds to the polynomial order.  In the final part of the derivation we will assume that in each element the quadrature points can be used to interpolate any function to the local polynomial space.  For later use we introduce 
\begin{definition}
The quadrature and the polynomial space satisfy the Interpolation Condition if any function values at the quadrature points in each element can be interpolated by a polynomial, which belongs to the local polynomial space. For each element $T$ the local interpolating polynomial defines the  Quadrature Point Interpolant $I_q(T)$. By $I_q$ we denote the corresponding global Quadrature Point Interpolant.
\end{definition}

For example, with rectangular elements we can use a tensor product Lagrange basis  with piece-wise polynomials of degree $p$ in each spatial direction based on  Lagrange-Gauss-Lobatto nodes, and a corresponding Gauss-Legandre quadrature with $p+1$ quadrature points in each direction, to satisfy the Interpolation Condition. Such a quadrature rule can integrate polynomials of order $2p+1$ exactly. If a higher order quadrature involving more points is used, the Interpolation Condition will not be satisfied.
Comments on how to extend the result to the other quadratures or exact integration will follow in the end of the section. 

We will analyze the discretization in Laplace space, and therefore we  consider the Laplace transform of the discrete weak form,
\begin{subequations}
\label{eq:pml_system_3D_weak_form_laplace}
\begin{alignat}{4}
&\left(\frac{s^2}{\kappa}\hat{u}_h + \frac{s}{\kappa}\sum_{\eta} d_\eta \hat{u}_h + \frac{d_xd_yd_z}{\kappa}\sum_{\eta}\frac{1}{d_\eta} \hat{u}_h, v\right)_{h} 
=  
-\left(\frac{1}{\rho} \grad {\hat{u}_h}, \grad{v}\right)_h \nonumber\\ & - \left({\boldsymbol{\hat{\phi}_h}}, \grad{v}\right)_h -\left(\frac{d_xd_yd_z}{\kappa} \hat{\psi}_h, v\right)_h + \left(\frac{1}{\kappa}\hat{F}_{u}, v\right)_{h} \label{eq:pml_1_3D_weak_form_laplace} \\ 
%%%%%
  &\left(s\hat{\boldsymbol{\phi}}_h, \mathbf{p}\right)_h = \left(-\mathbf{d} \hat{\boldsymbol{\phi}}_h+\frac{1}{\rho}\boldsymbol{\Gamma} \grad{\hat{u}_h}+\frac{1}{\rho} \boldsymbol{\Upsilon} \grad{\hat{\psi}_h}, \mathbf{p}\right)_h, 
  \label{eq:pml_2_3D_weak_form_laplace}
  \\
  %%%%%%%%
		&\left(s\hat{\psi}_h, q\right)_h = \left(\hat{u}_h,q\right)_h, \label{eq:pml_3_3D_weak_form_laplace}
\end{alignat}
\end{subequations}
The effects of initial data is incorporated in the forcing function $\hat{F}_{u_h}$, but here we have for brevity assumed forcing only in the main equation. Here $v\in V_h$ and $\boldsymbol{w}\in W_h$. Further, the discrete inner product used in (\ref{eq:pml_1_3D_weak_form_laplace})-(\ref{eq:pml_3_3D_weak_form_laplace}), and the corresponding norm are defined by quadrature instead of integrals.

The next step is to derive a discrete reduced weak form involving only $\hat{u}_h$ and $v$. We would like to use  $\boldsymbol{\tilde p}(\mathbf{x})=\left(s^{*}I+\boldsymbol{d}(\mathbf{x})\right)^{-1}\grad v$ as test function in (\ref{eq:pml_2_3D_weak_form_laplace}). Here $I$ is the identity matrix and $\boldsymbol{d}$ is a diagonal matrix defined in section 3. However, for variable damping $\boldsymbol{\tilde p}$ is not in $W_h$ even if $v\in V_h$. If the Interpolation Condition (see definition 1) is satisfied, the corresponding  Quadrature Point Interpolant can be used, and we have
\begin{theorem}\label{thm:reducedNew}
Let $\hat{u}_h$, $\boldsymbol{\hat{\phi}_h}$, and $\hat{\psi}_h$ satisfy the discrete weak form in Laplace space, (\ref{eq:pml_1_3D_weak_form_laplace})-(\ref{eq:pml_3_3D_weak_form_laplace}). For constant damping functions the solution $\hat{u}_h$  satisfies the reduced weak form
\begin{equation}\label{eq:weakform_reduced2}
\left(\frac{s^2S_xS_yS_z}{\kappa}\hat{u}_h,  {v}\right)_h = -\left(\frac{1}{\rho}\boldsymbol{S}\grad_d\hat{u}_h,\grad v\right)_h + \left(\frac{1}{\kappa}\hat{F}_{u}, v\right)_h, \quad \forall v\in V_h. 
\end{equation}
If the Interpolation Condition (see definition 1) is fulfilled, then the solution $\hat{u}_h$ satisfies \eqref{eq:weakform_reduced2} also in the case of spatially varying damping functions. 
\end{theorem}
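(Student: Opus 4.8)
The plan is to eliminate the auxiliary unknowns $\hat{\psi}_h$ and $\boldsymbol{\hat{\phi}}_h$ from the discrete Laplace-space system \eqref{eq:pml_1_3D_weak_form_laplace}--\eqref{eq:pml_3_3D_weak_form_laplace}, mirroring the continuous elimination that produced \eqref{eq:reduced_laplace_forcing}, but now carrying along the quadrature inner product $(\cdot,\cdot)_h$. First I would extract $\hat{\psi}_h$ from \eqref{eq:pml_3_3D_weak_form_laplace}: since $s\hat{\psi}_h-\hat{u}_h\in V_h$ and $(s\hat{\psi}_h-\hat{u}_h,q)_h=0$ for all $q\in V_h$, non-degeneracy of the discrete mass form on $V_h$ forces $\hat{\psi}_h=\hat{u}_h/s$ as polynomials, hence $\grad{\hat{\psi}_h}=\grad{\hat{u}_h}/s$ everywhere. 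Substituting this into \eqref{eq:pml_2_3D_weak_form_laplace} gives, for every $\mathbf{p}\in W_h$,
\begin{equation*}
\left((sI+\mathbf{d})\boldsymbol{\hat{\phi}}_h,\mathbf{p}\right)_h=\left(\frac{1}{\rho}\left(\boldsymbol{\Gamma}+\tfrac{1}{s}\boldsymbol{\Upsilon}\right)\grad{\hat{u}_h},\mathbf{p}\right)_h .
\end{equation*}

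The decisive step is to compute the coupling term $(\boldsymbol{\hat{\phi}}_h,\grad{v})_h$ appearing in \eqref{eq:pml_1_3D_weak_form_laplace}. I would take the test function $\boldsymbol{\tilde p}=(s^{*}I+\mathbf{d})^{-1}\grad{v}$. Because $\mathbf{d}$ is diagonal with real entries, multiplication by $(s^{*}I+\mathbf{d})$ is pointwise, and under $(\cdot,\cdot)_h$ its conjugate transpose is $(sI+\mathbf{d})$; this identity at every quadrature node yields $(\boldsymbol{\hat{\phi}}_h,\grad{v})_h=\big((sI+\mathbf{d})\boldsymbol{\hat{\phi}}_h,\boldsymbol{\tilde p}\big)_h$. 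Combining with the displayed $\boldsymbol{\hat{\phi}}_h$-relation and moving the diagonal factor back to the left gives
\begin{equation*}
(\boldsymbol{\hat{\phi}}_h,\grad{v})_h=\left((sI+\mathbf{d})^{-1}\frac{1}{\rho}\left(\boldsymbol{\Gamma}+\tfrac{1}{s}\boldsymbol{\Upsilon}\right)\grad{\hat{u}_h},\grad{v}\right)_h .
\end{equation*}
Adding $\big(\tfrac{1}{\rho}\grad{\hat{u}_h},\grad{v}\big)_h$ and verifying the $x$-component scalar identity
\begin{equation*}
\frac{1}{\rho}+\frac{1}{(s+d_x)\rho}\Big(d_y+d_z-d_x+\tfrac{1}{s}d_yd_z\Big)=\frac{1}{\rho}\frac{S_yS_z}{S_x}
\end{equation*}
(and its cyclic permutations), which is exactly the diagonal of $\tfrac1\rho\mathbf{S}\grad_d$, collapses both terms into $\big(\tfrac1\rho\mathbf{S}\grad_d\hat{u}_h,\grad{v}\big)_h$. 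Substituting $\hat{\psi}_h=\hat{u}_h/s$ into the mass term on the left of \eqref{eq:pml_1_3D_weak_form_laplace} and using
\begin{equation*}
s^{2}+s(d_x+d_y+d_z)+(d_xd_y+d_xd_z+d_yd_z)+\tfrac{1}{s}d_xd_yd_z=s^{2}S_xS_yS_z
\end{equation*}
produces the mass term of \eqref{eq:weakform_reduced2}, completing the reduction.

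The hard part is legitimising $\boldsymbol{\tilde p}=(s^{*}I+\mathbf{d})^{-1}\grad{v}$ as an admissible test function. For \emph{constant} damping, $(s^{*}I+\mathbf{d})^{-1}$ is a constant diagonal matrix, so $\boldsymbol{\tilde p}$ is merely a component-wise constant multiple of $\grad{v}\in W_h$; hence $\boldsymbol{\tilde p}\in W_h$ and the chain of identities holds verbatim, giving the first assertion. For \emph{spatially varying} damping, $(s^{*}I+\mathbf{d}(\mathbf{x}))^{-1}$ is a non-polynomial rational function of $\mathbf{x}$, so $\boldsymbol{\tilde p}\notin W_h$ and cannot be inserted directly. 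This is precisely where the Interpolation Condition of Definition~1 enters: I would replace $\boldsymbol{\tilde p}$ by its Quadrature Point Interpolant $I_q(\boldsymbol{\tilde p})\in W_h$. Since $(\cdot,\cdot)_h$ sees its arguments only through their values at the quadrature nodes, and $I_q(\boldsymbol{\tilde p})$ agrees with $\boldsymbol{\tilde p}$ at every such node, all the discrete inner products above are unchanged under this swap; the same observation justifies the step $(\boldsymbol{\hat{\phi}}_h,\grad{v})_h=((sI+\mathbf{d})\boldsymbol{\hat{\phi}}_h,\boldsymbol{\tilde p})_h$, which only compares integrands at quadrature points. Thus, under the Interpolation Condition, the variable-damping case reduces to the identical algebra, yielding the second assertion.
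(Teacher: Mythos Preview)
Your proof is correct and follows essentially the same route as the paper: eliminate $\hat{\psi}_h=\hat{u}_h/s$ from \eqref{eq:pml_3_3D_weak_form_laplace} by non-degeneracy of the discrete mass form, then test \eqref{eq:pml_2_3D_weak_form_laplace} with (the Quadrature Point Interpolant of) $(s^{*}I+\mathbf{d})^{-1}\grad v$ to express $(\boldsymbol{\hat{\phi}}_h,\grad v)_h$ in terms of $\hat{u}_h$, and collapse the algebra into $\mathbf{S}\grad_d$ and $s^2S_xS_yS_z$. Your write-up is in fact somewhat more explicit than the paper's, spelling out the scalar identities for both the mass term and the diagonal of $\tfrac{1}{\rho}\mathbf{S}\grad_d$, and articulating clearly why the quadrature-based inner product sees only nodal values and hence is insensitive to replacing $\boldsymbol{\tilde p}$ by $I_q(\boldsymbol{\tilde p})$.
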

\begin{proof}
Consider the case of spatially varying damping functions. The constant case follows in the same way. In \eqref{eq:pml_3_3D_weak_form_laplace} we note that $\hat{\psi}_h$ and $\hat{u}_h$ are in the same space, which is spanned by the test functions $q$. Thus  we have 
$$
\hat{\psi}_h = \frac{1}{s}\hat{u}_h.
$$
We can thus eliminate $\hat{\psi}_h$ from \eqref{eq:pml_2_3D_weak_form_laplace} and \eqref{eq:pml_1_3D_weak_form_laplace}. 
Next we let 
$$\boldsymbol{p_h}(\mathbf{x})=I_q(T_j)(s^*I+\boldsymbol{d}(\mathbf{ x}))^{-1}\grad v(\mathbf{x}),\quad\mathbf{x}\in T_j,$$
for any $v\in V_h$. By the Interpolation Condition $\boldsymbol{p_h}\in W_h$. With $\boldsymbol{p_h}$ as test function in \eqref{eq:pml_2_3D_weak_form_laplace} we get
\begin{equation}
\nonumber
\left({\boldsymbol{\hat{\phi}_h}},\grad v\right)_h=
\left(\frac{1}{\rho}\left(\Gamma+\frac{1}{s}\Upsilon\right)\grad \hat{u}_h, (s^*I+\boldsymbol{d})^{-1}\grad v\right)_h.
\end{equation}
By the definitions of the diagonal matrices $\Gamma$ and $\Upsilon$, this relation can be rewritten as
\begin{equation}\nonumber
\left({\boldsymbol{\hat{\phi}_h}},\grad v\right)_h=
\left(\frac{1}{\rho}\boldsymbol{S}\grad_d\hat{u}_h,\grad v\right)_h-\left(\frac{1}{\rho}\grad\hat{u}_h,\grad v\right)_h,
\end{equation}
and we can also  eliminate $\boldsymbol{\hat{\phi}_h}$ from \eqref{eq:pml_1_3D_weak_form_laplace}, arriving at the problem in reduced weak-form, \eqref{eq:weakform_reduced2}.
\end{proof}

If the Interpolation Condition is not satisfied we need to find a suitable test function $\boldsymbol{\tilde{p}_h}\in W_h$ that will allow eliminating $\boldsymbol{\hat{\phi}_h}$ from \eqref{eq:pml_1_3D_weak_form_laplace}.
We will use a projection of $\boldsymbol{\tilde p}(\mathbf{x})=\left(s^{*}I+\boldsymbol{d}(\mathbf{x})\right)^{-1}\grad v$ defined by
\begin{definition}
 For $\boldsymbol{g}\in W_h$ the projection $\boldsymbol{ g_p}=\Pi_p (s^{*}I+\mathbf{d})^{-1}\boldsymbol{g}\in W_h$ is defined as the solution to
\begin{equation}
 \left(\boldsymbol{w_h},(s^{*}I+\boldsymbol{d})\boldsymbol{g_p} \right)_h=
\left(\boldsymbol{w_h},\boldsymbol{g}\right)_h, \quad \forall \boldsymbol{w_h}\in W_h.
\end{equation}
\end{definition}
Now introduce $\boldsymbol{ p_h}=\Pi_p(s^{*}I+\boldsymbol{d})^{-1}\grad v$ as test function in \eqref{eq:pml_2_3D_weak_form_laplace}, and get
\begin{equation}
\nonumber
\left({\boldsymbol{\hat{\phi}_h}},\grad v\right)_h=
\left(\frac{1}{\rho}\left(\Gamma+\frac{1}{s}\Upsilon\right)\grad \hat{u}_h,\Pi_p (s^*I+d)^{-1}\grad v\right)_h.
\end{equation}
By the definitions of the diagonal matrices $\Gamma$ and $\Upsilon$, this relation can be rewritten similarly as above,
\begin{equation}\label{eq:reduce}
\left(\boldsymbol{\hat{\phi}_h},\grad v\right)_h=
\left(\frac{1}{\rho}\boldsymbol{S}\grad_d \hat{u}_h,(s^*I+d)\Pi_p(s^*I+d)^{-1}\grad v\right)_h-\left(\frac{1}{\rho}\grad\hat{u}_h,\grad v\right)_h,
\end{equation}
and we can again  eliminate $\boldsymbol{\hat{\phi}_h}$ from \eqref{eq:pml_1_3D_0}, arriving at the problem in reduced weak-form. 
This proves the more general theorem
\begin{theorem}\label{thm:reduced}
Let $\hat{u}_h$, $\hat{\phi}_h$, and $\hat{\psi}_h$ satisfy the discrete weak form in Laplace space (\ref{eq:pml_1_3D_weak_form_laplace})-(\ref{eq:pml_3_3D_weak_form_laplace}). Then $\hat{u}_h$ also satisfies the reduced weak form
\begin{equation}\label{eq:weakform_reduced}
\left(\frac{s^2 S_xS_yS_z}{\kappa}\hat{u}_h,  {v}\right)_h
=  -\left(\frac{1}{\rho}\boldsymbol{S}\grad_d\hat{u}_h,(s^*I+d)\Pi_p(s^*I+d)^{-1}\grad v\right)_h
 + \left(\frac{1}{\kappa}\hat{F}_{u}, v\right)_h, \forall v\in V_h.
\end{equation}
Here $\Pi_p$ is defined in Definition 2.
\end{theorem}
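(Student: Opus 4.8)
The plan is to follow the same elimination strategy as in the proof of Theorem \ref{thm:reducedNew}, the only essential difference being that the test function $\boldsymbol{\tilde p}=(s^{*}I+\mathbf{d})^{-1}\grad v$ one would like to insert into \eqref{eq:pml_2_3D_weak_form_laplace} no longer lies in $W_h$ once the damping varies in space, so the Quadrature Point Interpolant must be replaced by the projection $\Pi_p$ of Definition 2. First I would dispose of $\hat{\psi}_h$: since in \eqref{eq:pml_3_3D_weak_form_laplace} both $\hat{\psi}_h$ and $\hat{u}_h$ lie in the same space spanned by the test functions $q$, the relation $s\hat{\psi}_h=\hat{u}_h$ holds exactly at the discrete level, giving $\hat{\psi}_h=\hat{u}_h/s$, which I then substitute into \eqref{eq:pml_1_3D_weak_form_laplace} and \eqref{eq:pml_2_3D_weak_form_laplace}.

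The core step is the elimination of $\boldsymbol{\hat{\phi}_h}$. I would take $\boldsymbol{p_h}=\Pi_p(s^{*}I+\mathbf{d})^{-1}\grad v$, which is a legitimate element of $W_h$ by construction, as test function in \eqref{eq:pml_2_3D_weak_form_laplace}. Moving the real diagonal factor $sI+\mathbf{d}$ onto the second argument turns the left-hand side into $(\boldsymbol{\hat{\phi}_h},(s^{*}I+\mathbf{d})\boldsymbol{p_h})_h$, and the defining property of $\Pi_p$ (Definition 2, tested with $\boldsymbol{w_h}=\boldsymbol{\hat{\phi}_h}\in W_h$) collapses this exactly to $(\boldsymbol{\hat{\phi}_h},\grad v)_h$, precisely the term appearing in \eqref{eq:pml_1_3D_weak_form_laplace}. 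On the right-hand side I use the algebraic identity $(sI+\mathbf{d})^{-1}\!\left(\boldsymbol{\Gamma}+\tfrac1s\boldsymbol{\Upsilon}\right)=\operatorname{diag}\!\big(\tfrac{S_yS_z}{S_x}-1,\tfrac{S_xS_z}{S_y}-1,\tfrac{S_xS_y}{S_z}-1\big)$, checked component-wise from $S_\eta=1+d_\eta/s$, to recognise that $\big(\boldsymbol{\Gamma}+\tfrac1s\boldsymbol{\Upsilon}\big)\grad\hat{u}_h=(sI+\mathbf{d})(\boldsymbol{S}\grad_d\hat{u}_h-\grad\hat{u}_h)$; shifting the diagonal factor back across the inner product then produces exactly \eqref{eq:reduce}.

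Finally I would substitute \eqref{eq:reduce} into \eqref{eq:pml_1_3D_weak_form_laplace}. The term $-(\tfrac1\rho\grad\hat{u}_h,\grad v)_h$ already present there cancels the matching term produced by \eqref{eq:reduce} (again via the projection identity, now with $\boldsymbol{w_h}=\tfrac1\rho\grad\hat{u}_h$), leaving only the stretched-gradient term. Replacing $\hat{\psi}_h$ by $\hat{u}_h/s$ in the zeroth-order term and collecting the coefficients of $\hat{u}_h$, I would verify the scalar factorisation $s^2+s\sum_\eta d_\eta+\sum_\eta\tfrac{d_xd_yd_z}{d_\eta}+\tfrac{d_xd_yd_z}{s}=s^2S_xS_yS_z$, which turns the whole mass block into $(\tfrac{s^2S_xS_yS_z}{\kappa}\hat{u}_h,v)_h$ and yields \eqref{eq:weakform_reduced}.

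I expect the genuine obstacle to be the construction and correct use of $\boldsymbol{p_h}$: for variable damping $\boldsymbol{\tilde p}\notin W_h$, so everything hinges on Definition 2 supplying a well-posed $W_h$ surrogate whose defining identity can be invoked twice, once with $\boldsymbol{w_h}=\boldsymbol{\hat{\phi}_h}$ to extract $(\boldsymbol{\hat{\phi}_h},\grad v)_h$, and once with $\boldsymbol{w_h}=\tfrac1\rho\grad\hat{u}_h$ to cancel the plain gradient term (one must check that this weighted gradient is an admissible test function, which holds when $\rho^{-1}$ is compatible with the element-wise polynomial spaces). The two verifications of the $S_\eta$ identities are routine but must be carried out exactly, so that no spurious lower-order terms survive in the final reduced form.
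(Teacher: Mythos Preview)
Your proposal is correct and follows essentially the same route as the paper: eliminate $\hat\psi_h$ via \eqref{eq:pml_3_3D_weak_form_laplace}, test \eqref{eq:pml_2_3D_weak_form_laplace} with $\boldsymbol{p_h}=\Pi_p(s^{*}I+\mathbf{d})^{-1}\grad v$, use Definition~2 to recover $(\boldsymbol{\hat\phi}_h,\grad v)_h$, and then substitute into \eqref{eq:pml_1_3D_weak_form_laplace}. Two small remarks: the factor $sI+\mathbf{d}$ is diagonal but not real (only $\mathbf{d}$ is), which is precisely why it becomes $s^*I+\mathbf{d}$ when shifted across the sesquilinear form; and your explicit flag that the second invocation of Definition~2 with $\boldsymbol{w_h}=\tfrac{1}{\rho}\grad\hat u_h$ requires this vector to lie in $W_h$ is a point the paper passes over silently, so you are in fact being more careful there.
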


For simplicity we will in the remainder of this section assume that the Interpolation Condition is satisfied. It follows that the reduced weak form is \eqref{eq:weakform_reduced2}.

We will use a similar energy for the approximate solution in Laplace space, $\hat{u}_h$, as for the exact solution
\begin{align}\label{eq:Eu_h}
E^2_{\hat{u}_h}(s) &= \left(\frac{1}{\kappa}s\hat{u}_h,s\hat{u}_h\right)_{h} + \sum_{\eta = x,y,z}\left(\frac{1}{S_{\eta}\rho} \pdd{\hat{u}_h}{\eta},\frac{1}{S_{\eta}}\pdd{\hat{u}_h}{\eta}\right)_{h} > 0
\end{align}
and an energy for the data
\begin{align}\label{eq:Ef_h}
E_{f_h}^2(s) &= \left(\frac{1}{\kappa}\hat{F}_u,\hat{F}_u\right)_{h} > 0
\end{align}

The discrete norms are defined using the same quadrature points as above.
We can now formulate a stability result.

\begin{theorem}\label{thm:energy_discrete}
Consider the discrete weak form \eqref{eq:weakform_reduced2}
in a piece-wise continuous heterogeneous media and with continuous damping $d_{\eta} (\eta)\geq0$, with homogeneous initial data. Assume that the Interpolation Condition (see Definition 1) is satisfied.
% number of quadrature points coincide with the number of node points. 
There exists a $\beta_h=\beta+{\mathcal O}(h^{p-1})$, with $p$ being the polynomial order of the discrete space, such that for any $\textrm{Re}(s) = a>\beta_h$  the solution $\hat{u}_h\in V_h$ satisfies 
\begin{align}\label{eq:energy_est_discrete}
(a -\beta_h)E_{\hat{u}_h}^2(s) %+\sum_{\eta = x,y,z}\left\|\frac{1}{S_{\eta}}\pdd{\hat{u}_h}{\eta}\right\|_{k_{\eta}/\rho}^2 + BT_r(s\hat{u}_h) 
\leq 2 E_{\hat{u}_h}(s)E_{f_h}(s).
\end{align}
%%%
Here $E_{\hat{u}_h}$, $E_{f_h}$ are defined in \eqref{eq:Eu_h}--\eqref{eq:Ef_h} and $\beta$ is defined in Theorem \ref{thm:energy_continuous_continuous_damping}.
\end{theorem}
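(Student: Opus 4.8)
The plan is to mimic, in the discrete setting, the Laplace-space energy argument used in the proof of Theorem \ref{thm:energy_continuous_continuous_damping}, starting from the reduced discrete weak form \eqref{eq:weakform_reduced2}. First I would take $v = s\hat{u}_h$ as the test function (legitimate since $s\hat{u}_h \in V_h$), which produces the discrete analogue of the terms appearing in the continuous proof. The left-hand side gives $\left(\frac{s}{\kappa}(s\hat{u}_h), s\hat{u}_h\right)_h$, and the right-hand side produces the stiffness term $-\left(\frac{1}{\rho}\boldsymbol{S}\grad_d \hat{u}_h, s\grad \hat{u}_h\right)_h$ together with the forcing term $\left(\frac{1}{\kappa}\hat{F}_u, s\hat{u}_h\right)_h$. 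Note that because the Interpolation Condition holds and the boundary term vanishes under the Dirichlet condition $r=-1$, there are no interface or boundary contributions to track; this is the discrete counterpart of the simplification in Theorem \ref{thm:energy_discrete} to Dirichlet data.

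Next I would rewrite the stiffness term componentwise using $\grad_d = (\tfrac{1}{S_x}\pd_x, \tfrac{1}{S_y}\pd_y, \tfrac{1}{S_z}\pd_z)^T$, so that the $\eta$-term reads $\left(\frac{(sS_\eta)^*}{\rho S_\eta}\left(\frac{1}{S_\eta}\pd_\eta \hat{u}_h\right), \frac{1}{S_\eta}\pd_\eta \hat{u}_h\right)_h$, exactly as in the continuous case. Then I would add the conjugate transpose of the whole identity. Using \eqref{eq:real}, the real part of the stiffness coefficient is $a + k_\eta \geq a$, so the symmetrized stiffness contribution is bounded below by $2a$ times the discrete gradient energy. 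This yields a discrete inequality of the form $2a\,E^2_{\hat{u}_h}(s) \leq 2\,\mathrm{Re}\left(\frac{1}{\kappa}\hat{F}_u, s\hat{u}_h\right)_h$, which would be the constant-damping case with $\beta_h = 0$.

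The main obstacle is the spatially varying damping, which—just as in the continuous proof—introduces an extra term coming from $\pd_\eta(1/S_\eta)$. In the continuous argument this term was controlled by $\left|\pd_\eta(1/S_\eta)\right| \le \left|d'_\eta/d_\eta\right|$ and absorbed into $\beta$ via Cauchy--Schwarz. The discrete subtlety is that the reduction in Theorem \ref{thm:reducedNew} used the \emph{exact} test function $(s^*I+\boldsymbol{d})^{-1}\grad v$ only up to the Quadrature Point Interpolant $I_q$. The commutator between $I_q$ and multiplication by the smooth coefficient $(s^*I+\boldsymbol{d})^{-1}$, measured in the discrete norm, is what produces the $\mathcal{O}(h^{p-1})$ perturbation; this is the heart of the claim $\beta_h = \beta + \mathcal{O}(h^{p-1})$. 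I expect this interpolation-error estimate to require standard polynomial approximation bounds on each element $K$ (with the factor $h^{p-1}$ reflecting the loss of one derivative from the gradient against a $p$-th order interpolant), together with the equivalence of the discrete quadrature norm and the $L^2$ norm on $V_h$. Once that perturbation is bounded and combined with the variable-damping term exactly as in the continuous proof, the coefficient $a$ is replaced by $a - \beta_h$, and a final Cauchy--Schwarz step on the forcing term, together with $\left|(sS_\eta)^*/(sS_\eta)\right| = 1$, delivers \eqref{eq:energy_est_discrete}.
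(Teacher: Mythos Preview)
Your proposal has a genuine gap at the very first step: the test function $v = s\hat{u}_h$ does \emph{not} produce the terms you claim. Plugging $v = s\hat{u}_h$ into the reduced weak form \eqref{eq:weakform_reduced2} leaves the factor $S_xS_yS_z$ in place, so the mass term is $\bigl(\tfrac{s^2 S_xS_yS_z}{\kappa}\hat{u}_h,\,s\hat{u}_h\bigr)_h$, not $\bigl(\tfrac{s}{\kappa}(s\hat{u}_h),\,s\hat{u}_h\bigr)_h$; likewise the $\eta$-component of the stiffness term is $\bigl(\tfrac{1}{\rho}\tfrac{S_xS_yS_z}{S_\eta^2}\,\partial_\eta\hat{u}_h,\,s\,\partial_\eta\hat{u}_h\bigr)_h$, which carries the complex weight $(S_xS_yS_z)^*/(S_\eta^*)^2$ rather than $1/|S_\eta|^2$. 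In the continuous proof of Theorem~\ref{thm:energy_continuous_continuous_damping} the crucial move was to multiply by $(s\widehat{u})^*/(S_xS_yS_z)$, i.e.\ to \emph{divide out} $S_xS_yS_z$. The discrete analogue, and what the paper actually does, is to take
\[
v \;=\; s\,I_q\!\bigl[(S_x^*S_y^*S_z^*)^{-1}\hat{u}_h\bigr]\in V_h,
\]
which is admissible precisely because of the Interpolation Condition. Since the discrete inner product sees only quadrature-point values, the $S_xS_yS_z$ factor cancels pointwise there, and one obtains exactly \eqref{eq:weak_laplace_reduced_00}.

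You also mislocate the source of the ${\mathcal O}(h^{p-1})$ perturbation. The reduction in Theorem~\ref{thm:reducedNew} introduces \emph{no} quadrature error: at quadrature points $I_q f = f$, so \eqref{eq:weakform_reduced2} holds exactly. The perturbation enters only in the energy argument, when one must differentiate the test function above. Taking $\partial_\eta$ of $I_q\bigl[(S_x^*S_y^*S_z^*)^{-1}\hat{u}_h\bigr]$ and comparing with $\partial_\eta\bigl[(S_x^*S_y^*S_z^*)^{-1}\hat{u}_h\bigr]$ yields a term $\partial_\eta(I_q\,\tfrac{1}{S_\eta}) = \partial_\eta(\tfrac{1}{S_\eta}) + {\mathcal O}(h^{p-1})$, and it is this derivative-of-interpolant error that shifts $\beta$ to $\beta_h$. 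After that correction, the remainder of the argument (adding the conjugate, using \eqref{eq:real}, Cauchy--Schwarz, and $|(sS_\eta)^*/(sS_\eta)|=1$) proceeds as you outlined.
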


\begin{proof}
 Let $v=sI_q\left(S_x^*S_y^*S_z^*\right)^{-1}\hat{u}_h$, where $I_q$ is the Quadrature Point Interpolant defined in Definition 1. By assumption $v\in V_h$. Note that in the quadrature points $v(\bar q_i^{(j)})=s\left(S_x^*S_y^*S_z^*\right)^{-1}\hat{u}_h$ at ${\bar{q}_i^{(j)}}$, and that for variable damping parameters the interpolation is needed to ensure that $v$ is in the test space. 
It follows that
\begin{equation}\label{eq:weak_laplace_reduced_00}
\begin{split}
\left(\frac{s^2}{\kappa}\hat{u_h},(s\hat{u_h})\right)_{h}  +
\sum_{\eta = x, y, z}\left(\frac{1}{\rho}\frac{1}{S_\eta^2} \pdd{\hat{u_h}}{\eta},\pdd{(s\hat{u_h})}{\eta}\right)_{h}+\\
+\sum_{\eta = x, y, z}\left(\frac{1}{\rho}\frac{1}{S_\eta} \pdd{\hat{u_h}}{\eta},\pdd{}{\eta}\left(I_p\frac{1}{S_\eta}\right)s\hat{u_h}\right)_{h}
%+ BT_r(s\hat{u})
=\left(\frac{1}{\kappa}\hat{F}_u, (s\hat{u_h})\right)_h \\
\end{split}
\end{equation}
Note that
$$
\pdd{}{\eta}\left(I_p\frac{1}{S_\eta}\right)=\pdd{}{\eta}\left(\frac{1}{S_\eta}\right)+{\mathcal O}(h^{p-1}).
$$
Thus adding the conjugate transpose of \eqref{eq:weak_laplace_reduced_00} and using Cauchy-Schwarz inequality with
\begin{equation*}
\bigl\lvert \frac{(sS_\eta)^*}{sS_\eta}\bigr\rvert = 1,
\end{equation*}
 gives us the desired estimate, which completes the proof.
\end{proof}

\begin{remark}
There is a corresponding stability estimate for the discrete solution in physical space. As for the continuous problem,  the estimate excludes exponential growth when the damping is constant, while with variable damping it allows  bounded, exponential growth.
\end{remark}
\begin{remark}  
If some other quadrature is used the more general reduced form, \eqref{eq:weakform_reduced}, is valid. Note that the difference between integral values when using different quadratures  is proportional to $h^p$ for some $p>0$. Therefore we expect that when using other quadratures, or exact integration, the discrete solution $\hat{u}_h$ will satisfy a similar, but slightly perturbed energy estimate. 
\end{remark}

\subsection{Convergence}

Next, we turn our attention to the question of convergence.   We will prove a Theorem which is valid for constant damping coefficients, $d_{\eta} \geq 0$. A discussion on how to extend the convergence proof to include continuously varying damping is given at the end of this subsection. However, the lemmas in this subsection are valid also for continuously varying damping functions. In this subsection we will use the Sobolev norm $\|\cdot \|_{H^{k}} = \|\cdot \|_{W^{k}_2}$ as well as the corresponding semi-norm $\lvert\cdot \rvert_{H^{k}} = \lvert\cdot \rvert_{W^{k}_2}$.

 Let $I_h:H^1(\Omega)\rightarrow V_h$, where $V_h$ is the space of piece-wise polynomials of degree $k$, be the standard Cl{\'e}ment interpolation operator, see for example \cite{ern2004theory}.  We will use the following variant of the standard interpolation result 
\begin{equation}\label{eq:interp}
\|\boldsymbol{\nabla}_d(u - I_hu)\| \leq \|\grad(u - I_hu)\| \leq C_I h^{l} \lvert u \rvert_{H^{l+1}(\Omega)},
\end{equation}
%{\it In Lemma \ref{lem:bilinear_A} we }
where $C_I$ is a constant independent of $u,\,h$ and $s$, and $1\leq l\leq k$. Note that the first inequality in \eqref{eq:interp} holds since $\lvert \frac{1}{S_{\eta}}\rvert\leq 1$. The second inequality holds as long as $u\in H^{l+1}(\Omega)$. Next we introduce a bi-linear form that will be useful in our convergence proof.
Let $u$ and $v$ be complex valued functions and define 
\begin{equation}\label{eq:aux_vf}
A(u,v) = \sum_{\eta = x,y,z} (s^*\frac{1}{\rho}\frac{1}{S_{\eta}^2} \pdd{u}{\eta}, \pdd{v}{\eta}).
\end{equation}
\begin{lemma}\label{lem:bilinear_A}
Consider $Re(s)=a>0$ and a given function $f\in L_2$. The bilinear form \eqref{eq:aux_vf} can be used to formulate two problems. Find $u,w \in V$ s.t. 
\begin{equation}\label{eq:aux_vf_adjoint}
A(u,v) =(f,v),\, \forall v \in V,\quad A(v,w)=(v,f), \, \forall v \in V,
\end{equation}
respectively. 
The following inequality holds
\begin{equation}\label{eq:A_bounded}
\lvert A(u,v)\rvert\leq \|\boldsymbol{\nabla}_d u\|_{\frac{1}{\rho}}\|\boldsymbol{\nabla}_d sv\|_{\frac{1}{\rho}},
\end{equation}
as well as the following elliptic regularity results
\begin{equation}\label{eq:elliptic_regularity}
    \|u\|_{H^2} \leq C_R\frac{1}{a} \|f\|, \quad \|w\|_{H^2} \leq C_R\frac{1}{a} \|f\|.
\end{equation}
Here $C_R$ is a constant independent of $u,\,h$ and $s$.
\end{lemma}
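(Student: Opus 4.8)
The three claims are of different character, and the plan is to establish them in the order boundedness, well-posedness, regularity.

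First I would prove the bound \eqref{eq:A_bounded}, which is purely algebraic. Writing the $\eta$-component of $\boldsymbol{\nabla}_d u$ as $(\boldsymbol{\nabla}_d u)_\eta = \frac{1}{S_\eta}\,\partial u/\partial\eta$, and conjugating the coefficient sitting in the first slot of the inner product, one rewrites the integrand of each term of $A(u,v)$ as $\frac{1}{\rho}\frac{S_\eta}{S_\eta^*}\left((\boldsymbol{\nabla}_d u)_\eta\right)^*(\boldsymbol{\nabla}_d(sv))_\eta$, using the identity $s\,\partial v/\partial\eta = S_\eta\,(\boldsymbol{\nabla}_d(sv))_\eta$. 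Since $\lvert S_\eta/S_\eta^*\rvert = 1$, a pointwise bound followed by Cauchy--Schwarz over $\Omega$ and over $\eta$ yields \eqref{eq:A_bounded} directly.

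For the two variational problems \eqref{eq:aux_vf_adjoint} I would invoke the complex (sesquilinear) Lax--Milgram theorem. Continuity in the $V$-norm follows from \eqref{eq:A_bounded} together with $\|\boldsymbol{\nabla}_d\,\cdot\,\| \le \|\grad\,\cdot\,\|$. For coercivity I would compute $\textrm{Re}\,A(u,u) = \sum_{\eta}\left(\frac{1}{\rho}\textrm{Re}\!\left(\frac{s}{(S_\eta^*)^2}\right)\partial u/\partial\eta,\,\partial u/\partial\eta\right)$ and show, by an explicit computation analogous to \eqref{eq:real}, that $\textrm{Re}\!\left(s/(S_\eta^*)^2\right)>0$ pointwise whenever $a>0$ and $d_\eta\ge0$. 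Since we restrict to the Dirichlet case $V = H_0^1(\Omega)$, Poincaré's inequality upgrades this gradient control to coercivity in the full $H^1$-norm, so the first problem has a unique solution $u$. The adjoint problem for $w$ is governed by the conjugate form $\overline{A(\cdot,\cdot)}$, whose Hermitian part coincides with that of $A$, so the identical argument applies.

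Finally, for the elliptic regularity \eqref{eq:elliptic_regularity} I would pass to the strong form: $u$ solves $-\sum_\eta \partial/\partial\eta\left(\frac{s^*}{\rho S_\eta^2}\,\partial u/\partial\eta\right)=f$ with homogeneous Dirichlet data, a second-order elliptic equation whose coefficient matrix has positive-definite Hermitian part (exactly the coercivity computation). Standard $H^2$ a priori estimates for such operators, under the usual smoothness assumptions on $\rho$ and on $\partial\Omega$ (e.g.\ convex or $C^{1,1}$), give $u\in H^2$ with $\|u\|_{H^2}\le C\|f\|$, and likewise for $w$. The main obstacle, and the step deserving real care, is extracting the uniform factor $1/a$: the leading coefficient has modulus of order $\lvert s\rvert^{3}/\lvert s+d_\eta\rvert^{2}$ while its real part is bounded below by a quantity of order $a\,\lvert s\rvert^{2}/\lvert s+d_\eta\rvert^{2}$, so one must combine the energy bound for $\|\grad u\|$ with the second-derivative estimate from the strong form and exploit both $\lvert s\rvert\ge a$ and $\lvert 1/S_\eta\rvert\le1$ to control the directional anisotropy $\lvert S_\eta/S_{\eta'}\rvert$. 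The delicate point is guaranteeing that the ellipticity and anisotropy constants, and hence $C_R$, do not degenerate as $s$ ranges over $\textrm{Re}(s)=a$.
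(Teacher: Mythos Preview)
Your proposal is correct and follows essentially the same route as the paper: ellipticity of $A$ comes from the real-part identity \eqref{eq:real}, boundedness \eqref{eq:A_bounded} from $\lvert S_\eta/S_\eta^*\rvert=1$ together with Cauchy--Schwarz, and regularity by appeal to standard elliptic theory. The paper's own proof is extremely terse --- it records the identity $\textrm{Re}\bigl(s^*\tfrac{1}{\rho S_\eta^2}\bigr)=\tfrac{1}{\rho\lvert S_\eta\rvert^2}\textrm{Re}\bigl(\tfrac{(sS_\eta)^*}{S_\eta}\bigr)$, cites \eqref{eq:real} for positivity, cites a textbook for regularity, and dismisses \eqref{eq:A_bounded} in one line --- so your version is strictly more detailed, not different in substance. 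In particular, your explicit invocation of the complex Lax--Milgram theorem and your attention to how the regularity constant depends on $s$ (to extract the factor $1/a$) fill in steps the paper leaves to the reader; the paper does not track the $1/a$ dependence at all in its proof and simply asserts it.
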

\begin{proof}
%With our choice of $Re(s)=a>0$, the real part of the coefficient in $A$ is positive, i.e.
Note that
$Re(s^*\frac{1}{\rho}\frac{1}{S_{\eta}^2})=\frac{1}{\rho \lvert S_\eta\rvert^2}Re(\frac {s^*S_\eta^*}{S_\eta})$. By \eqref{eq:real} this quantity is positive as long as $Re(s)>0$, and ellipticity follows. Along with ellipticity we get the elliptic regularity result \eqref{eq:elliptic_regularity}, see for example \cite{BrennerScott}. The inequality \eqref{eq:A_bounded} follows easily since $\lvert S_\eta^*\rvert /\lvert S_\eta\rvert=1$.

% From this the rest of the Lemma follows.

\end{proof}
Thanks to \eqref{eq:real} the following Lemma can straightforwardly be proven.
\begin{lemma}\label{lem:bilinear_Atilde}
Consider $Re(s) = a >0$. Let $u$ and $v$ be complex valued functions, and let $A$ be the bilinear form defined in Lemma \ref{lem:bilinear_A}. For the bilinear form 
$ \tilde{A}(u,v) = A(u,v)+A^*(u,v).$
we have the following inequality 
\begin{equation}\label{eq:A_coercive}
\tilde{A}(u,u) \geq a\|\boldsymbol{\nabla}_d u\|^2_{\frac{1}{\rho}},
\end{equation}
\end{lemma}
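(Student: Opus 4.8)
The plan is to reduce the claimed coercivity to the pointwise identity \eqref{eq:real}. Since $\tilde A(u,v)=A(u,v)+A^*(u,v)$ and $\rho$ is real, the coefficient appearing in $A^*(u,u)$ is the complex conjugate of the one in $A(u,u)$, so that $\tilde A(u,u)=2\,\textrm{Re}\,A(u,u)$. Writing out the inner product $(f,g)=\int_\Omega f^*g\,d\mathbf{x}$ in the definition \eqref{eq:aux_vf} with $u=v$ gives
\begin{equation*}
\tilde A(u,u)=2\sum_{\eta=x,y,z}\int_\Omega \textrm{Re}\!\left(\frac{s}{(S_\eta^*)^2}\right)\frac{1}{\rho}\left|\pdd{u}{\eta}\right|^2 d\mathbf{x},
\end{equation*}
so everything hinges on a lower bound for the scalar $\textrm{Re}(s/(S_\eta^*)^2)$ at each point and each $\eta$.

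The main step is the algebraic manipulation that connects this scalar to \eqref{eq:real}. First I would factor out $|S_\eta|^{-2}$ by multiplying numerator and denominator by $S_\eta^2$, using $|S_\eta|^2=S_\eta S_\eta^*$, to obtain $\textrm{Re}(s/(S_\eta^*)^2)=|S_\eta|^{-2}\,\textrm{Re}(sS_\eta/S_\eta^*)$. Since $sS_\eta/S_\eta^*$ is the complex conjugate of $(sS_\eta)^*/S_\eta$ and the real part is invariant under conjugation, \eqref{eq:real} applies directly and yields $\textrm{Re}(sS_\eta/S_\eta^*)=a+k_\eta$. Hence $\textrm{Re}(s/(S_\eta^*)^2)=(a+k_\eta)/|S_\eta|^2$.

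To finish, I would use $d_\eta\ge0$, $b^2\ge0$ and $|sS_\eta|^2>0$ (guaranteed since $\textrm{Re}(s)=a>0$ forces $S_\eta\ne0$), so that $k_\eta\ge0$ and therefore $\textrm{Re}(s/(S_\eta^*)^2)\ge a/|S_\eta|^2$ termwise. Recalling that $\|\boldsymbol{\nabla}_d u\|^2_{\frac{1}{\rho}}=\sum_{\eta}\int_\Omega \rho^{-1}|S_\eta|^{-2}|\pdd{u}{\eta}|^2\,d\mathbf{x}$, summing the pointwise bound over $\eta$ and integrating gives $\tilde A(u,u)\ge 2a\|\boldsymbol{\nabla}_d u\|^2_{\frac{1}{\rho}}$, which is in fact stronger than the stated estimate $\ge a\|\boldsymbol{\nabla}_d u\|^2_{\frac{1}{\rho}}$.

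I do not expect a genuine obstacle here, consistent with the paper's remark that the lemma follows straightforwardly from \eqref{eq:real}. The only place to be careful is the conjugation bookkeeping in the step that rewrites $\textrm{Re}(s/(S_\eta^*)^2)$ so that \eqref{eq:real} becomes applicable, together with keeping track of which slot of the sesquilinear inner product $(\cdot,\cdot)$ carries the complex conjugation. Once that identity is in hand, the coercivity is purely termwise and relies on nothing beyond $k_\eta\ge0$.
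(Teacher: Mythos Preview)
Your argument is correct and is precisely the ``straightforward'' derivation the paper alludes to: the paper gives no detailed proof and simply states that the lemma follows from \eqref{eq:real}, and your computation of $\tilde A(u,u)=2\,\textrm{Re}\,A(u,u)$ together with the identity $\textrm{Re}(s/(S_\eta^*)^2)=|S_\eta|^{-2}(a+k_\eta)$ is exactly that. Your observation that one actually obtains $\tilde A(u,u)\ge 2a\|\boldsymbol{\nabla}_d u\|^2_{1/\rho}$ is also correct; the stated bound with $a$ is just a weaker consequence.
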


 We can define finite element approximations corresponding to the two elliptic problems defined in Lemma 1. In particular we have
\begin{lemma} \label{lem:ritz}
Let $ u_h\in V_h\in V$ be the standard finite element approximation corresponding to the first problem defined in \eqref{eq:aux_vf_adjoint}, with $A$ being the bi-linear form \eqref{eq:aux_vf}. Then ${u}_h$ satisfies the orthogonality
\begin{equation}\label{eq:orthogonality}
A({u}_h-{u},v) = 0, \quad v \in V_h
\end{equation}
\end{lemma}
\begin{proof}
Subtracting the variational formulation from its corresponding finite element formulation gives us the result. 
\end{proof}
We can then define our Ritz-projection $R_h : V\rightarrow V_h $ as: 
\begin{equation}\label{eq:ritz-like}
A(R_h{u}-{u},v) = 0, \quad \forall v \in V_h
\end{equation}
The Ritz projection satisfies the following best approximation result
\begin{lemma}\label{lem:bar} Assume $u\in V$.
For all $Re(s)=a>0$  the Ritz projection, $R_h u$, defined in  \ref{eq:ritz-like}, satisfies the following best approximation result
\begin{equation} \label{eq:bar}
\|\boldsymbol{\nabla}_d(R_h{u}-{u}) \|_{\frac{1}{\rho}} \leq \frac{1}{a}\|\boldsymbol{\nabla}_d (sv-s u)\|_{\frac{1}{\rho}}, \quad \forall v \in V_h.
\end{equation}

\end{lemma}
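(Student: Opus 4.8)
The plan is to establish the best approximation bound for the Ritz projection by combining the coercivity of the symmetrized bilinear form $\tilde A$ from Lemma \ref{lem:bilinear_Atilde}, the Galerkin orthogonality \eqref{eq:ritz-like}, and the boundedness estimate \eqref{eq:A_bounded} from Lemma \ref{lem:bilinear_A}. The general strategy is the classical C\'ea-type argument, adapted to the fact that the relevant bilinear form $A$ is not itself coercive (it is complex-valued and only its real part, i.e.\ $\tilde A$, enjoys coercivity).

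First I would set $e = R_h u - u$ and aim to control $\|\boldsymbol{\nabla}_d e\|_{1/\rho}$. The natural starting point is to apply Lemma \ref{lem:bilinear_Atilde} to $e$, giving
\begin{equation*}
a\,\|\boldsymbol{\nabla}_d e\|^2_{1/\rho} \le \tilde A(e,e) = A(e,e) + A^*(e,e) = 2\,\mathrm{Re}\,A(e,e).
\end{equation*}
Next I would insert an arbitrary $v\in V_h$ using the orthogonality \eqref{eq:ritz-like}: since $A(R_h u - u, w) = 0$ for every $w\in V_h$ and $R_h u - v \in V_h$, we may replace one argument of $A(e,\cdot)$ to write $A(e,e) = A(e, R_h u - u) = A(e, v - u)$, shifting the second slot from the discrete error to the interpolation-type error $v - u$. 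This is the decisive step that converts the estimate into a comparison against an arbitrary finite element function.

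From there I would apply the boundedness inequality \eqref{eq:A_bounded}, which bounds $|A(e, v-u)|$ by $\|\boldsymbol{\nabla}_d e\|_{1/\rho}\,\|\boldsymbol{\nabla}_d (s v - s u)\|_{1/\rho}$. Combining the two inequalities yields
\begin{equation*}
a\,\|\boldsymbol{\nabla}_d e\|^2_{1/\rho} \le 2\,\|\boldsymbol{\nabla}_d e\|_{1/\rho}\,\|\boldsymbol{\nabla}_d(s v - s u)\|_{1/\rho},
\end{equation*}
and dividing through by $\|\boldsymbol{\nabla}_d e\|_{1/\rho}$ (assuming it is nonzero, the other case being trivial) gives the claimed bound up to the handling of the constant. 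I would then take the infimum over $v\in V_h$ to obtain the best approximation form.

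The main obstacle I anticipate is purely bookkeeping with the factor: the statement \eqref{eq:bar} carries a clean constant $1/a$, whereas the symmetrization introduces a factor of $2$ in $\tilde A = A + A^*$. I expect the resolution is that the authors' normalization of $A$ (note the $s^*$ prefactor in \eqref{eq:aux_vf}) absorbs this, or that $\tilde A(e,e) = \mathrm{Re}\,A(e,e)$ after the real part is taken — so the careful point is to match the $2\,\mathrm{Re}$ bookkeeping against the coercivity constant in Lemma \ref{lem:bilinear_Atilde} so that the net prefactor is exactly $1/a$. A secondary subtlety worth checking is that the $s$ inside $\|\boldsymbol{\nabla}_d(sv - su)\|_{1/\rho}$ on the right-hand side of \eqref{eq:bar} is consistent with the $s^*$ sitting in the definition of $A$; since $|s^*/s| = 1$ (as already exploited in Lemma \ref{lem:bilinear_A}), this factor does not affect the magnitude and the estimate goes through. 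No step should require anything beyond the three cited lemmas and the elementary algebra above.
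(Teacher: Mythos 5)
Your argument is essentially identical to the paper's proof: coercivity of $\tilde{A}$ from Lemma \ref{lem:bilinear_Atilde}, Galerkin orthogonality \eqref{eq:ritz-like} to replace the second argument by $v-u$, and the boundedness \eqref{eq:A_bounded}, i.e.\ the standard C\'ea argument. The factor of $2$ you flag resolves exactly as you suspect: by \eqref{eq:real} the symmetrized form satisfies $\tilde{A}(u,u)=2\,\mathrm{Re}\,A(u,u)\geq 2a\|\boldsymbol{\nabla}_d u\|^2_{\frac{1}{\rho}}$, and this sharper coercivity constant cancels the $2$ arising from $\tilde{A}=A+A^*$ in the boundedness step, giving precisely the prefactor $1/a$ in \eqref{eq:bar}.
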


\begin{proof}
For any $v\in V_h$ let $R_h{u}-{u} =R_h{u}-v+v-{u}$ then it follows
\begin{equation*}
a\|\nabla_d(R_h{u}-{u})\|^2_{\frac{1}{\rho}} \leq \tilde{A}(R_h{u}-{u},v-{u}) \leq\|\boldsymbol{\nabla}_d(R_h{u}-{u}) \|_{\frac{1}{\rho}} \|\boldsymbol{\nabla}_d (sv-s\hat{u})\|_{\frac{1}{\rho}},
\end{equation*}
where we used the orthogonality result, the Ritz projection and the inequalities \eqref{eq:A_bounded} and \eqref{eq:A_coercive}. This concludes the proof.
\end{proof}

In the next Lemma  we state the convergence of the Ritz projection.
\begin{lemma}\label{lem:ritz-conv}
Assume $Re(s) = a >0$. For all ${u} \in H^{k+1}(\Omega)$ with $k\ge 1$, $R_h {u} $ in \eqref{eq:ritz-like} satisfies the following two estimates
\begin{equation}\label{eq:ritz_error_1}
\|\boldsymbol{\nabla}_d( R_h{u}-{u})\| \leq C_1 \frac{1}{a} h^k \|s{u}\|_{\frac{1}{\rho},H^{k+1}(\Omega)},
\end{equation}
and
\begin{equation}\label{eq:ritz_error_2}
\| R_h{u}-{u}\| \leq C_2\frac{1 }{a^2} h^{k+1} \|s^2{u}\|_{\frac{1}{\rho},H^{k+1}(\Omega)}.
\end{equation}
Here $C_1$ and $C_2$ are constants independent of $u,\,h$ and $s$.
\end{lemma}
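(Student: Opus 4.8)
The plan is to prove the two estimates separately. The gradient estimate \eqref{eq:ritz_error_1} follows almost immediately from the best-approximation bound already established in Lemma \ref{lem:bar}, while the $L^2$ estimate \eqref{eq:ritz_error_2} is obtained by an Aubin--Nitsche duality argument built on the adjoint problem of Lemma \ref{lem:bilinear_A} and then combined with the first estimate.

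For \eqref{eq:ritz_error_1}, I would start from \eqref{eq:bar}, which holds for every $v\in V_h$, and choose $v=I_hu$ to be the Clément interpolant. Since $s$ is a fixed scalar, $\boldsymbol{\nabla}_d s(I_hu-u)=s\,\boldsymbol{\nabla}_d(I_hu-u)$, so the interpolation estimate \eqref{eq:interp} with $l=k$ bounds $\|\boldsymbol{\nabla}_d(I_hu-u)\|$ by $C_I h^k\lvert u\rvert_{H^{k+1}(\Omega)}$. Inserting this into \eqref{eq:bar}, absorbing the scalar $s$ into the $H^{k+1}$ norm, and keeping the factor $1/a$ inherited directly from \eqref{eq:bar} gives \eqref{eq:ritz_error_1}. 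The only bookkeeping here is that \eqref{eq:bar} is stated in the weighted norm $\|\cdot\|_{\frac{1}{\rho}}$ while \eqref{eq:interp} is unweighted; since $\rho$ is bounded above and below these norms are equivalent, so the discrepancy is absorbed into $C_1$.

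For \eqref{eq:ritz_error_2}, set $e=R_hu-u$ and let $w\in V$ solve the adjoint problem $A(v,w)=(v,e)$ for all $v\in V$, i.e. the second problem in \eqref{eq:aux_vf_adjoint} with data $f=e$. Taking $v=e$ gives $\|e\|^2=A(e,w)$. The Galerkin orthogonality \eqref{eq:orthogonality}, which places $e$ in the first argument of $A$, yields $A(e,w_h)=0$ for every $w_h\in V_h$, hence $\|e\|^2=A(e,w-w_h)$. I then apply the boundedness \eqref{eq:A_bounded} with first argument $e$ and second argument $w-w_h$, choose $w_h=I_hw$, and use \eqref{eq:interp} at lowest order together with the elliptic regularity $\|w\|_{H^2}\leq C_R a^{-1}\|e\|$ from \eqref{eq:elliptic_regularity} to bound $\|\boldsymbol{\nabla}_d s(w-I_hw)\|$ by $C\,\lvert s\rvert\,a^{-1}h\,\|e\|$. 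This produces $\|e\|^2\leq C\,\lvert s\rvert a^{-1}h\,\|\boldsymbol{\nabla}_d e\|\,\|e\|$, so $\|e\|\leq C\,\lvert s\rvert a^{-1}h\,\|\boldsymbol{\nabla}_d e\|$. Substituting \eqref{eq:ritz_error_1} and using that $\lvert s\rvert\,\|su\|_{H^{k+1}}=\|s^2u\|_{H^{k+1}}$ (since $s$ is a scalar) delivers \eqref{eq:ritz_error_2} with the factor $a^{-2}$.

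The main obstacle is the duality step for \eqref{eq:ritz_error_2}: one must employ the \emph{adjoint} bilinear form (the second problem in \eqref{eq:aux_vf_adjoint}) so that $e$ lands in the first slot of $A$ and the orthogonality \eqref{eq:orthogonality} applies, and one must track the scalar $s$ and the $1/a$ factors carefully through the regularity and interpolation bounds so that the powers of $s$ recombine into $\|s^2u\|_{H^{k+1}}$ and the powers of $1/a$ accumulate to $1/a^2$. Everything else is routine, modulo the weighted-norm equivalence already noted.
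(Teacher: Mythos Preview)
Your proposal is correct and follows essentially the same route as the paper: \eqref{eq:ritz_error_1} via Lemma~\ref{lem:bar} with $v=I_hu$ and \eqref{eq:interp}, and \eqref{eq:ritz_error_2} via the Aubin--Nitsche argument using the adjoint problem $A(v,w)=(v,e)$, Galerkin orthogonality, boundedness \eqref{eq:A_bounded}, interpolation of $w$, and elliptic regularity \eqref{eq:elliptic_regularity}. The only cosmetic difference is that the paper places the scalar $s$ from \eqref{eq:A_bounded} on the $e$ factor rather than on $w-I_hw$, which is immaterial since $|s|$ can be shuttled freely between the two norms.
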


\begin{proof}
To begin with we note that the error estimate \eqref{eq:ritz_error_1} follows from Lemma \ref{lem:bar} and the interpolation result \eqref{eq:interp}.

Next we prove the estimate \eqref{eq:ritz_error_2}. Let $e = R_h{u} - {u}$ and consider the elliptic problem defined in Lemma \ref{lem:bilinear_A}. Find $w\in H^2(\Omega)$ such that
\begin{equation}\label{eq:eta-dual}
A(v,w) = (v,e), \quad \forall v\in V
\end{equation}
Note that the forcing is chosen to be the error $e$. Choosing $v = e$ and applying the definition of our Ritz projection yields
\begin{alignat*}{2}
    \|e\|^2 &= A(e,w)
    = A(e,w-I_hw) \leq\|\nabla_d se\|_{\frac{1}{\rho}} \| \nabla_d(w-I_hw) \|_{\frac{1}{\rho}}\\% \\&
    &\leq C^*_{I}\frac{1}{a}h^{k}\|s^2{u}\|_{\frac{1}{\rho},H^{k+1}}h\lvert w\rvert_{\frac{1}{\rho},H^2}%\\&
    \leq C_2\frac{1}{a^2}h^{k+1}\|s^2{u}\|_{\frac{1}{\rho},H^{k+1}}\|e\|,
\end{alignat*}
where we used the interpolation result \eqref{eq:interp} together with result \eqref{eq:ritz_error_1} and elliptic regularity \eqref{eq:elliptic_regularity}. The constants $C^*_{I}$ and $C_2 = C^*_IC_R$ are constants independent of $u,\,s$ and $h$. $C^*_I$ is obtained from applying the interpolation result \eqref{eq:interp} two times and $C_2 = C^*_IC_R$, where $C_R$ comes from applying \eqref{eq:elliptic_regularity}. This concludes the proof.
\end{proof}

Finally we will show convergence of the reduced weak form \eqref{eq:weakform_reduced2}. 
Our convergence result is for the case  of constant damping. Thus the reduced  weak form \eqref{eq:weakform_reduced2} is equivalent to
\begin{equation}\label{eq:FEM_convergence}
(\frac{s^2}{\kappa} \hat{u}_h, v) + \sum_{\eta = x,y,z}(\frac{S_{\eta}^*}{S_{\eta}}\frac{1}{\rho}\frac{1}{S_{\eta}} \pdd{\hat{u}_h}{\eta},\frac{1}{S_{\eta}} \pdd{v}{\eta}) = (F,v), \quad \forall v \in V_h
\end{equation}
Clearly the solution $\hat{u}$ of the continuous problem satisfies the same reduced weak form for $\forall v\in V$.
For this weak form we have
\begin{theorem}\label{thm:convergence}
For any $s \in \mathbf{C}$, with $Re(s) = a > 0$  the solution $ \hat{u}_h$ to \eqref{eq:FEM_convergence} with constant damping satisfies the a priori estimate
\begin{equation}
    \|\hat{u}_h-\hat{u}\| \leq C h^{k+1}\left(\frac{1}{a^3}\|s^3 \hat{u} \|_{H^{k+1}(\Omega)}+\frac{1}{a^2}\|s^2 \hat{u} \|_{H^{k+1}(\Omega)}\right).
\end{equation}
Here $\hat{u}$ is assumed to be a sufficiently smooth solution of the corresponding continuous problem and $C$ is a constant independent of $h,\,\hat u$ and $s$.
\end{theorem}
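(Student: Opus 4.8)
The plan is to follow the classical finite element error analysis, splitting the error through the Ritz projection $R_h$ of \eqref{eq:ritz-like} and then invoking the discrete stability estimate rather than a pure duality (Aubin--Nitsche) argument. I would write
$$\hat u_h - \hat u = \theta + \delta, \qquad \theta := \hat u_h - R_h\hat u \in V_h, \quad \delta := R_h\hat u - \hat u,$$
estimate the two contributions separately, and close with the triangle inequality. The projection error $\delta$ is handled immediately: since $\hat u$ is assumed smooth, Lemma~\ref{lem:ritz-conv}, estimate \eqref{eq:ritz_error_2}, gives
$$\|\delta\| = \|R_h\hat u - \hat u\| \le C_2\,\frac{1}{a^2}\,h^{k+1}\,\|s^2\hat u\|_{\frac{1}{\rho},H^{k+1}(\Omega)},$$
which, after absorbing the density weight into the constant, is exactly the second term of the claimed bound.

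The heart of the matter is the finite element component $\theta\in V_h$. Here I would subtract the discrete weak form \eqref{eq:FEM_convergence} from its continuous counterpart to get Galerkin orthogonality for $\theta+\delta$, and then use two facts: that the gradient sesquilinear form in \eqref{eq:FEM_convergence} equals $\tfrac1s A(\cdot,\cdot)$ with $A$ the form \eqref{eq:aux_vf}, and that the Ritz orthogonality \eqref{eq:orthogonality} makes $A(\delta,v)=0$ for all $v\in V_h$. The gradient contribution of $\delta$ therefore drops out, and $\theta$ is seen to satisfy the reduced weak form \eqref{eq:FEM_convergence} driven by the lower-order data determined by $(F_\theta,v) = -(\tfrac{s^2}{\kappa}\delta,v)$; in other words $\theta$ solves a discrete PML problem whose forcing is controlled by $\delta$ alone.

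With $\theta$ recast this way, I would apply the discrete energy estimate of Theorem~\ref{thm:energy_discrete} to $\theta$. For constant damping the spatially varying interpolation correction in that proof is absent (the factors $1/S_\eta$ are constant), so $\beta_h=\beta=0$ and \eqref{eq:energy_est_discrete} reduces to $a\,E^2_{\hat u_h}(s)\le 2 E_{\hat u_h}(s)E_{f_h}(s)$ applied to $\theta$, giving $E_\theta(s)\le \tfrac{2}{a}E_{f_\theta}(s)$. The data energy satisfies $E_{f_\theta}(s)= |s|^2\|\delta\|_{1/\kappa}$ with $\hat F_{u,\theta}=-s^2\delta$, while $E_\theta(s)\ge |s|\,\|\theta\|_{1/\kappa}$ by \eqref{eq:Eu_h}; combining these yields $\|\theta\|\lesssim \tfrac{|s|}{a}\|\delta\|$. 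Substituting the bound for $\|\delta\|$ and using $|s|\,\|s^2\hat u\| = \|s^3\hat u\|$ produces the first term $\tfrac{1}{a^3}h^{k+1}\|s^3\hat u\|_{H^{k+1}(\Omega)}$, and the triangle inequality finishes the proof.

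The step I expect to be the main obstacle is justifying the stability bound for $\theta$, and it is precisely here that the restriction to constant damping is essential. The argument behind Theorem~\ref{thm:energy_discrete} rests on testing with $v \propto s(S_x^*S_y^*S_z^*)^{-1}\theta$; this function lies in $V_h$ only because, for constant damping, $(S_x^*S_y^*S_z^*)^{-1}$ is a constant, so no Quadrature Point Interpolant is required and no $\mathcal O(h^{p-1})$ term survives in $\beta_h$. Some care is also needed in passing between the $\tfrac{1}{\rho}$- and $\tfrac{1}{\kappa}$-weighted norms and the unweighted norms of the statement, and between the discrete and exact inner products; these introduce only constants depending on the bounds of $\kappa$ and $\rho$, which are absorbed into $C$. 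Once the $\theta$-stability estimate is in hand, the remainder is routine bookkeeping.
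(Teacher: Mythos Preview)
Your proposal is correct and follows essentially the same route as the paper: the same Ritz-projection splitting $\hat u_h-\hat u=\theta+\delta$, the same use of \eqref{eq:orthogonality} to remove the gradient contribution of $\delta$, the same invocation of Lemma~\ref{lem:ritz-conv} for $\delta$, and the same energy argument for $\theta$. The only cosmetic difference is that the paper carries out the $\theta$-estimate by testing directly with $v=s\xi$ and adding the conjugate, rather than citing Theorem~\ref{thm:energy_discrete}; note that Theorem~\ref{thm:energy_discrete} is phrased for the quadrature inner product $(\cdot,\cdot)_h$ while \eqref{eq:FEM_convergence} uses exact integration, so strictly speaking you are re-running its proof (trivial for constant damping, since $(S_x^*S_y^*S_z^*)^{-1}$ is a scalar) rather than quoting it verbatim.
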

\begin{proof}
Let $e = \hat{u}_h - \hat{u}$ and introduce $\xi = \hat{u}_h - R_h\hat{u}$ and $\gamma =  R_h \hat{u} - \hat{u}$ so that
$e = \xi + \gamma$. For $\gamma$ we have the error estimate in Lemma \ref{lem:ritz-conv}. What remains is to find a bound for $\xi$ as well.
We have
\begin{equation*}
    (s^2(\xi+\gamma),v) + \sum_{\eta = x,y,z}(\frac{1}{\rho S_{\eta}^2}\pdd{(\xi+\gamma)}{\eta},\pdd{v}{\eta}) = 0.
\end{equation*}
We let $v= s\xi$ and get
\begin{equation*}
    s\|s\xi\|^2 + \sum_{\eta = x,y,z}(\frac{s^*S_{\eta}^*}{S_{\eta}\rho}\frac{1}{S_{\eta}}\pdd{\xi}{\eta},\frac{1}{S_{\eta}}\pdd{\xi}{\eta}) = -(s^2\gamma,s\xi)-\sum_{\eta = x,y,z}(\frac{s^*}{S_{\eta}^2\rho}\pdd{\gamma}{\eta},\pdd{\xi}{\eta}).
\end{equation*}
The last term disappear by the Ritz projection  \eqref{eq:ritz-like}. Adding conjugate to both sides yields
\begin{equation*}
    a(\|s\xi\|^2 +\|\boldsymbol{\nabla}_d \xi\|^2) \leq \|s^2 \gamma\| \|s \xi\|
\end{equation*}
which further gives us
\begin{equation}
    \|s\xi\| \leq \frac{1}{a}\|s^2\gamma\|\leq \frac{1}{a}\|s^2(R_h\hat{u}-\hat{u})\|\leq \frac{h^{k+1}}{a^3}\|s^4\hat{u}\|_{H^{k+1}}. 
\end{equation}
Note that $\|s\xi\|=\lvert s\rvert \|\xi\|$ and $\|s^4u\|_{H^{k+1}}=\lvert s\rvert \|s^3\hat{u}\|_{H^{k+1}}$. We now have our estimate of 
\begin{equation}\label{eq:xi_in}
    \|\xi\| \leq C\frac{h^{k+1}}{a^3}\|s^3\hat{u}\|_{H^{k+1}}
\end{equation}
Combining \eqref{eq:xi_in} and Lemma \ref{lem:ritz-conv} together with the triangle inequality we get our desired result.
\end{proof}
\begin{remark}Theorem \ref{thm:convergence} was proven for constant damping coefficients $d_{\eta}$. To show the corresponding convergence result for varying damping coefficients we need a discrete stability result, which is based on exact integration. As discussed above such a stability result is likely to be a perturbation of the stability estimate for the particular discretization discussed in Section 6.1. It is therefore not surprising that good convergence properties are seen in Section 7  for our discretizations of the variable damping case.
\end{remark}
\section{Numerical Experiments\label{sec:experiments}}
In this section, we present some numerical experiments to validate the theoretical results presented in Section \ref{sec:discrete}, as well as to quantify the numerical PML errors.

All the experiments in this section are performed in 2D on the domain $\Omega_T = [-6,6]\times [-6,6]$ with PML layer size $\Delta_{pml}  = 0.6$ along all sides. 
We denote the original domain, i.e. the domain without PML as $\Omega_{in} = [-5.4,5.4]\times[-5.4,5.4]$.
We use the spatial discretisation presented in Section \ref{sec:fem}.
In the experiments, we use standard continuous square quadrilateral Lagrange elements, $Q_p$, for the solution $u$, where $p = 1,2,3$ denotes the order of the Lagrange polynomials. To match the theory we use discontinuous square quadrilateral Lagrange elements for the auxiliary variables. Gauss-Legendre quadrature nodes are used for integration over the elements.
As boundary condition, we use the Dirichlet boundary condition defined in equation \eqref{eq:dirichlet}, which is imposed strongly.
The initial condition is set to zero and we have a forcing function in the form of a Gaussian pulse centered at $(0,0)$.
For all experiments, we let the damping profile be defined as 
\begin{align}\label{eq:dampingx}
d_x= \left\{ \begin{array}{cc} 
               d_0( \frac{\lvert x\rvert-5.4}{\Delta_{pml}})^3, & \hspace{5mm} if \lvert x \rvert \geq 5.4 \\
                0, & \hspace{5mm} otherwise \\
                \end{array} \right.
\end{align}
\begin{align}\label{eq:dampingy}
d_y= \left\{ \begin{array}{cc} 
               d_0( \frac{\lvert y\rvert-5.4}{\Delta_{pml}})^3, & \hspace{5mm} if \lvert y\rvert \geq 5.4 \\
                0, & \hspace{5mm} otherwise \\
                \end{array} \right.
\end{align}
where $d_0\geq0$ is the damping strength coefficient and it is chosen as in \cite{duru2016role} to balance the modeling error and the discretisation error. It is given by
\begin{equation}
    d_0 = \frac{4c}{2\Delta_{pml}}ln(\frac{1}{tol})
\end{equation}
where $\Delta_{pml}$ is the PML-width, c is the wave-speed and $tol$ is chosen as 
\begin{equation}
    tol = C_0\left[\frac{1}{\Delta_{pml}}\frac{h}{p+1}\right]^{p+1}.
\end{equation}
Here $p$ is the order of the polynomial approximation, $h$ is the element size, and $C_0 = 2$.
The PML error tolerance, \emph{ tol}, is controlled by the PML with $\Delta_{pml}$, the order of the polynomial approximation $p$, and the element size $h$. For a PML of finite thickness the PML error will converge to zero at the rate of the accuracy of the underlying numerical method \cite{duru2016role,duru2019energy,DURU2014757}.

For time integration we will use the fourth-order Runge-Kutta method.

\subsection{Homogeneous problem}
In this subsection we consider the homogeneous case i.e. $\kappa = \rho = 1$, and first investigate the numerical PML-error in maximum norm.
In order to measure the numerical PML-error, we simulate both the PML problem as described in this section, as well as a reference problem.
The reference problem is a problem defined on the larger domain $\Omega_r = [-12,12]\times[-12,12]$, on which we solve the wave equation without any PML.
The size of $\Omega_r$ is chosen so that the reflections from the outer boundary will not reach $\Omega_{in}$ during the simulation.
We use the same initial condition, the same source, the same time step, and the same spatial resolution.
The numerical PML-error is defined as the difference between the PML solution and the reference solution, and is calculated at every node in $\Omega_{in}$, and at every time step. 
The simulation is performed for $t\in [0,10]$. At the end of this time interval a reflected wave in the reference problem will just have reached the outer boundary of $\Omega_{in}$. 
The computations are done with Q1, Q2, and Q3 elements, as well as for different element sizes.
We use a time step $\Delta t = 0.01$ for all the elements and all spatial resolutions. 
This time step is chosen so that the dominant error for all resolutions and all orders will come from the space discretisation.

In Figures \ref{fig:max_pml_1}, \ref{fig:max_pml_2}, and \ref{fig:max_pml_3} we have plotted the maximum PML-error in $\Omega_{in}$ as a function of time for different resolutions and for different polynomial orders. In Figure \ref{fig:pml_conv} the convergence of the PML-error in maximum norm can be seen. The error is measured at the end time $t = 10$. We can see that the solution converges with at least the expected rates as we refine the element size.

To verify the stability results from Section \ref{sec:discrete} we look at the long-time behavior of our simulated PML-problem. 
For this purpose solve the same problem as before but for times $t\in[0,150]$, so that we can see what happens when the simulation is allowed to run for a longer time. 
We then measure the maximum amplitude of our solution in $\Omega_{in}$. The result can be seen in Figure \ref{fig:pml_lts}.
Note that no long-time growth of the solution can be observed.

In Figure \ref{fig:snap_homogeneous} we can see snapshots of the solution at times $t = 2$, $t = 4$, $t=6$ and $t = 12$. We can follow how the Gaussian pulse propagates in $\Omega_{in}$ and vanishes as it enters the PML.
%here
\begin{figure}
\begin{subfigure}{.5\textwidth}
  \centering
  \includegraphics[width=0.4\paperwidth]{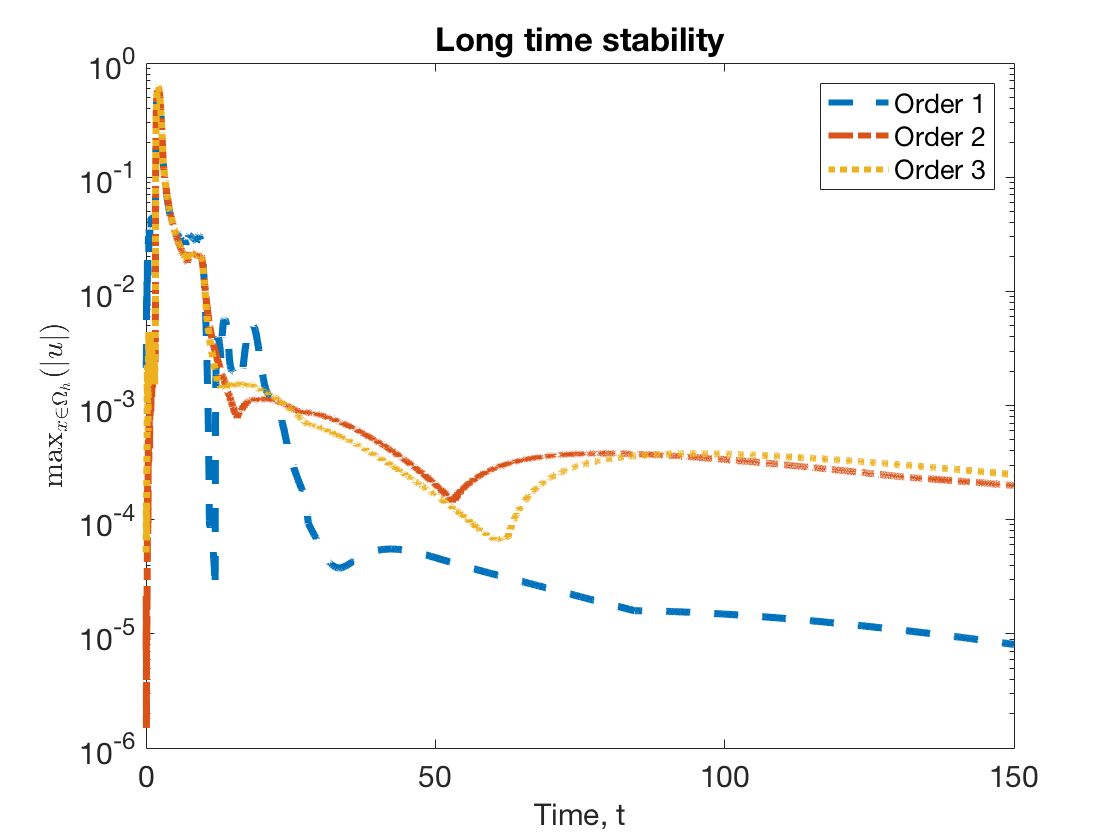}
  \caption{Homogeneous medium}
  \label{fig:pml_lts}
\end{subfigure}
\begin{subfigure}{.5\textwidth}
 \centering
  \includegraphics[width=0.4\paperwidth]{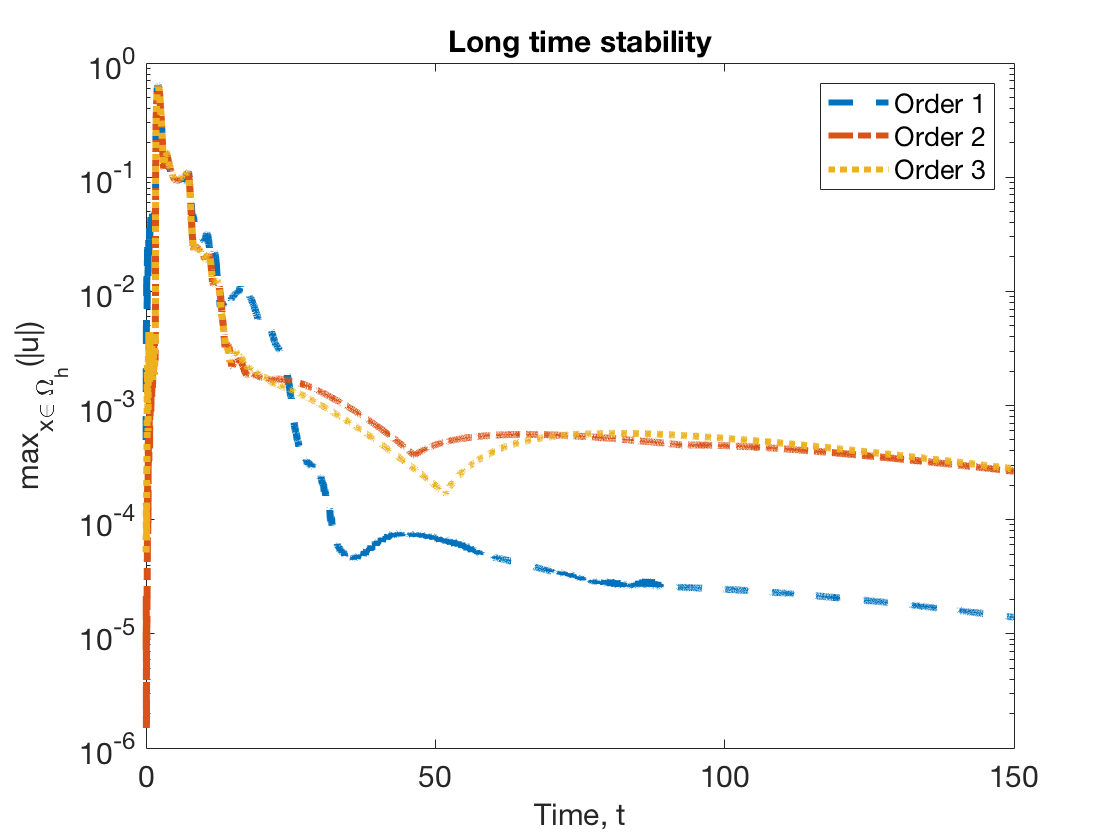}
  \caption{Heterogeneous medium
  }
  \label{fig:pml_lts_var}
\end{subfigure}
\caption{Long time stability for first, second and third order elements for the homogeneous and heterogeneous case.}
\label{fig:lts_both}
\end{figure}

\begin{figure}
\begin{subfigure}{.5\textwidth}
  \centering
  \includegraphics[width=.4\paperwidth]{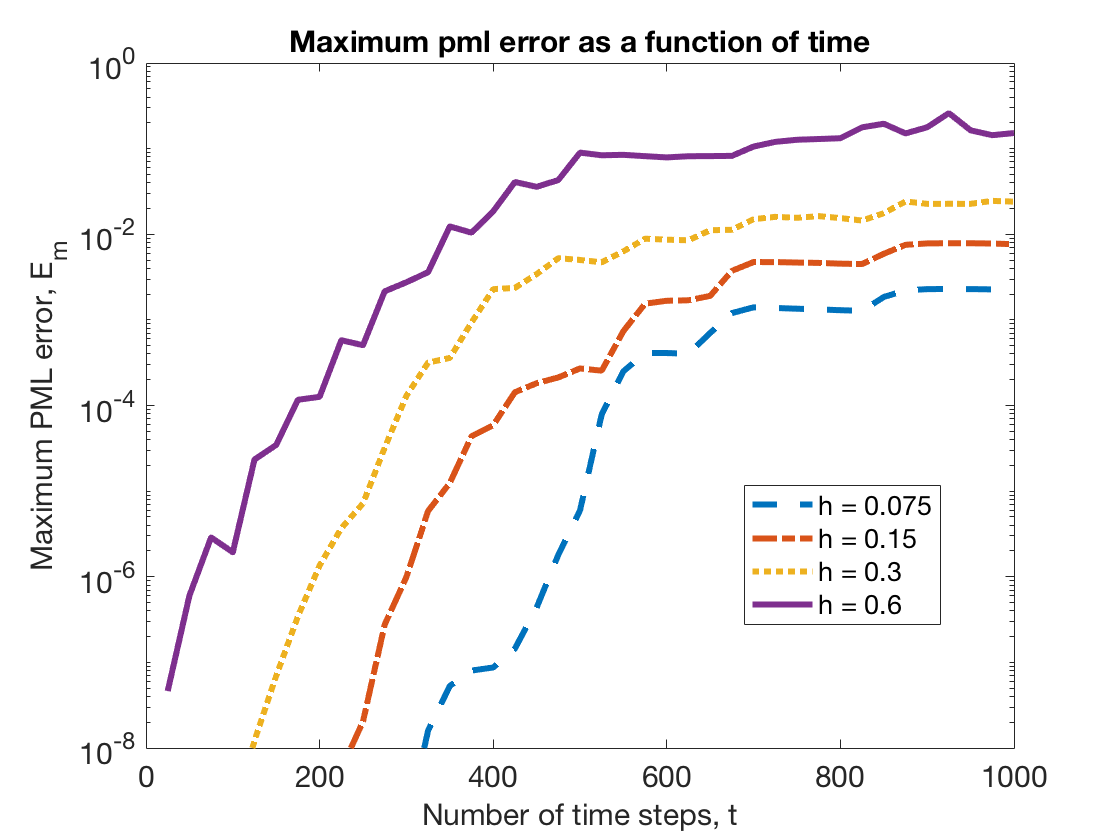}
  \caption{PML error using Q1 elements}
  \label{fig:max_pml_1}
\end{subfigure}
\begin{subfigure}{.5\textwidth}
  \centering
  \includegraphics[width=.4\paperwidth]{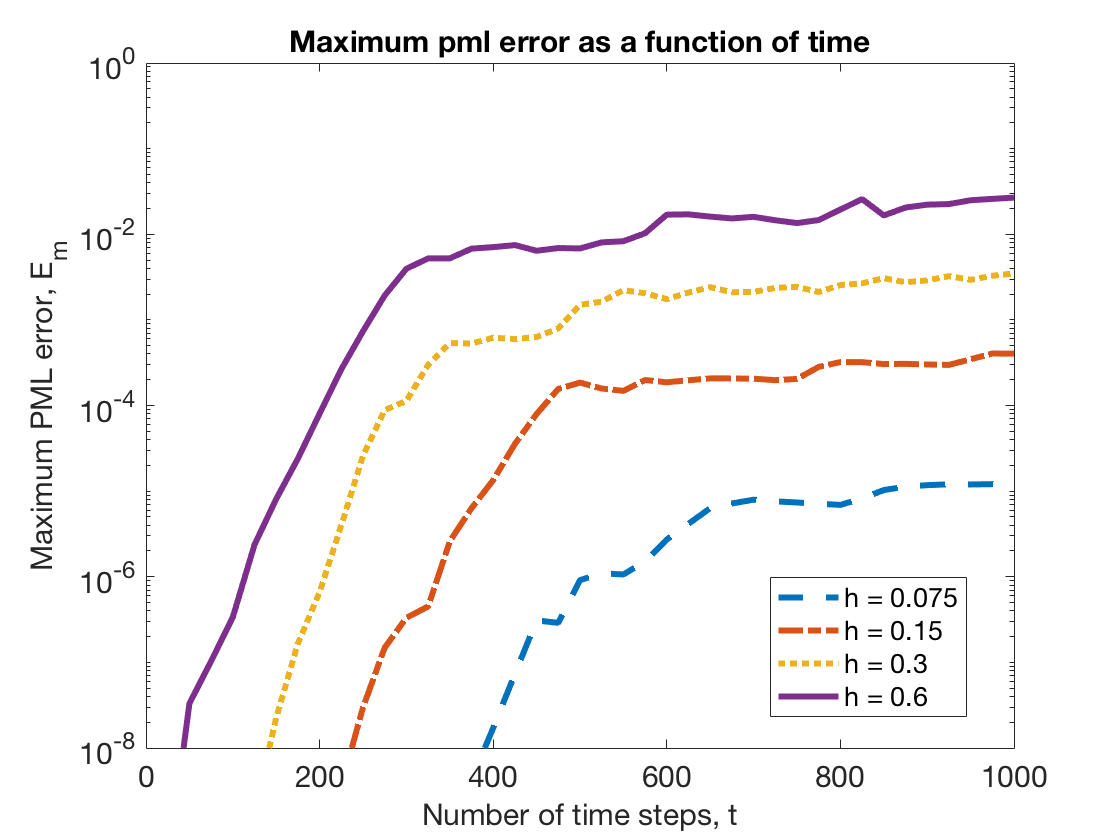}
  \caption{PML error using Q2 elements}
  \label{fig:max_pml_2}
\end{subfigure}
\newline
\begin{subfigure}{.5\textwidth}
 \centering
  \includegraphics[width=.4\paperwidth]{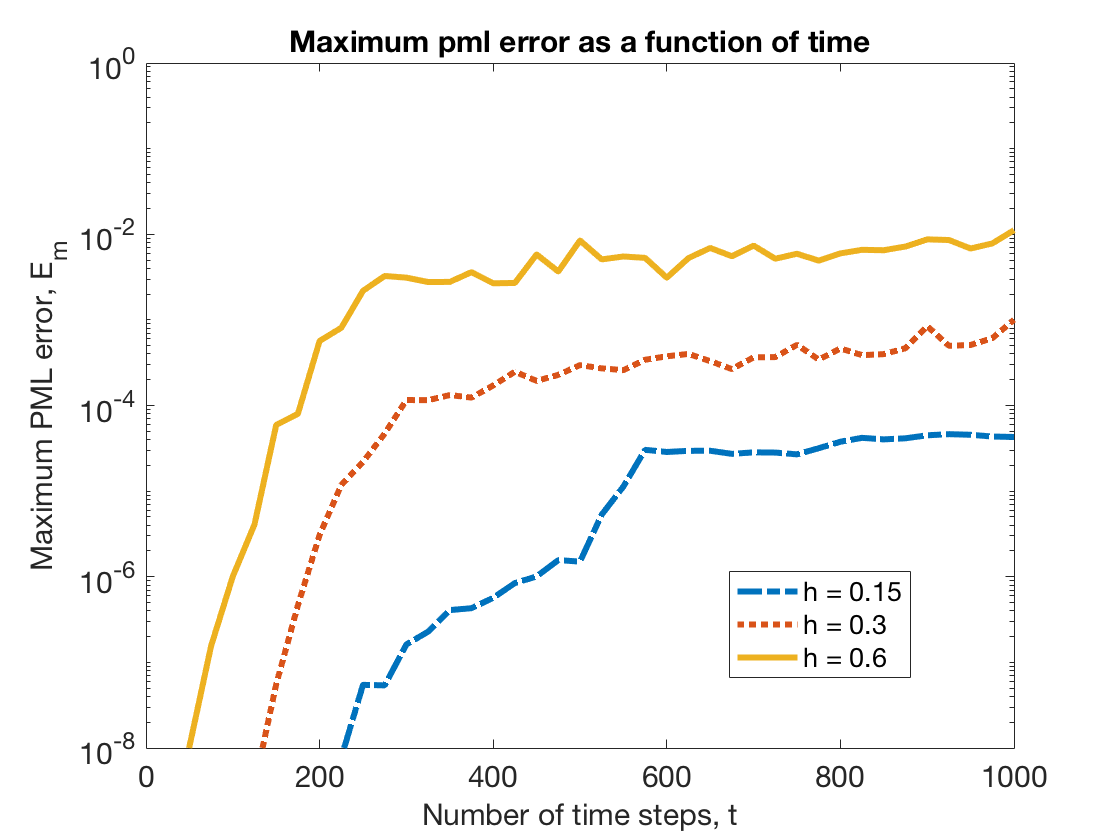}
  \caption{PML error using Q3 elements}
  \label{fig:max_pml_3}
\end{subfigure}
\begin{subfigure}{.5\textwidth}
  \centering
  \includegraphics[width=.4\paperwidth]{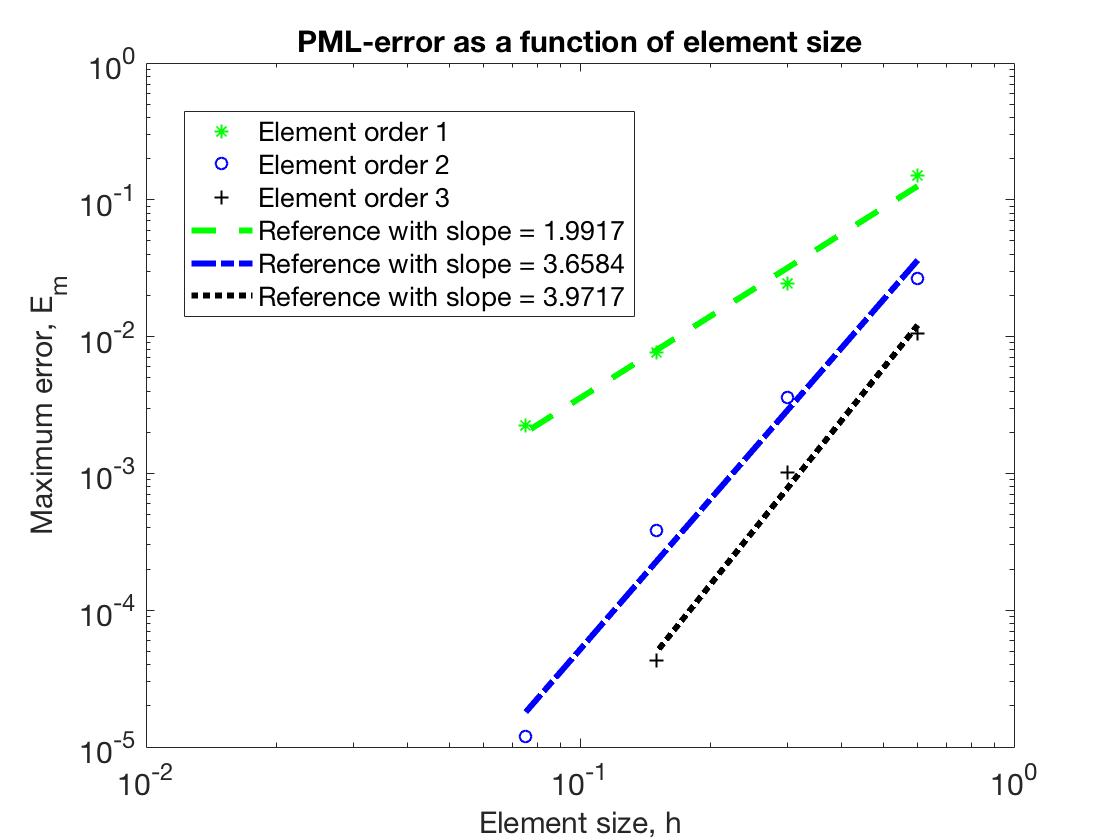}
  \caption{Convergence of the PML error}
  \label{fig:pml_conv}
\end{subfigure}
\caption{PML errors versus number of time steps can be seen in Figures \ref{fig:max_pml_1}, \ref{fig:max_pml_2}, \ref{fig:max_pml_3} for elements Q1, Q2 and Q3 respectively. In Figure \ref{fig:pml_conv} we can see the PML error convergence for Q1, Q2 and Q3 elements at end time $t = 10$}
\label{fig:homogeneous}
\end{figure}
%to here
\begin{figure}
\begin{subfigure}{.5\textwidth}
  \centering
  % include first image
  \includegraphics[width=1\linewidth]{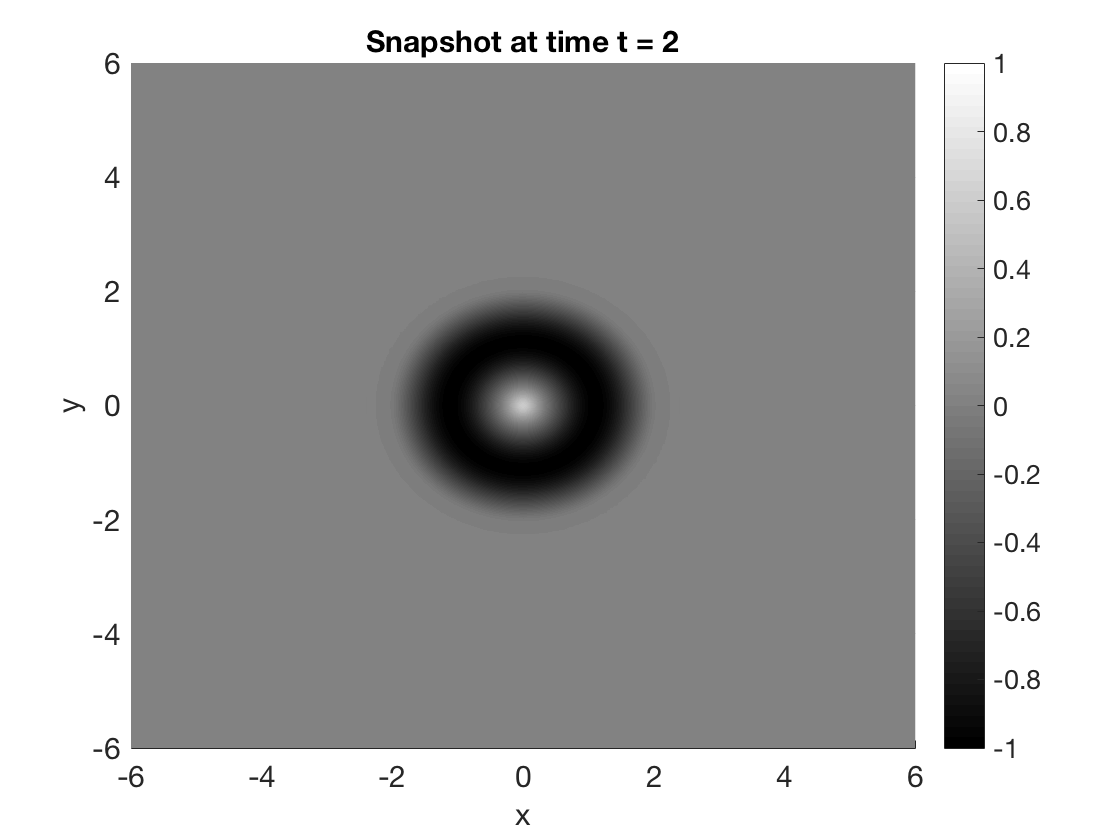}  
  \caption{Snapshot at $t = 2$}
  \label{fig:sub-first}
\end{subfigure}
\begin{subfigure}{.5\textwidth}
  \centering
  % include second image
  \includegraphics[width=1\linewidth]{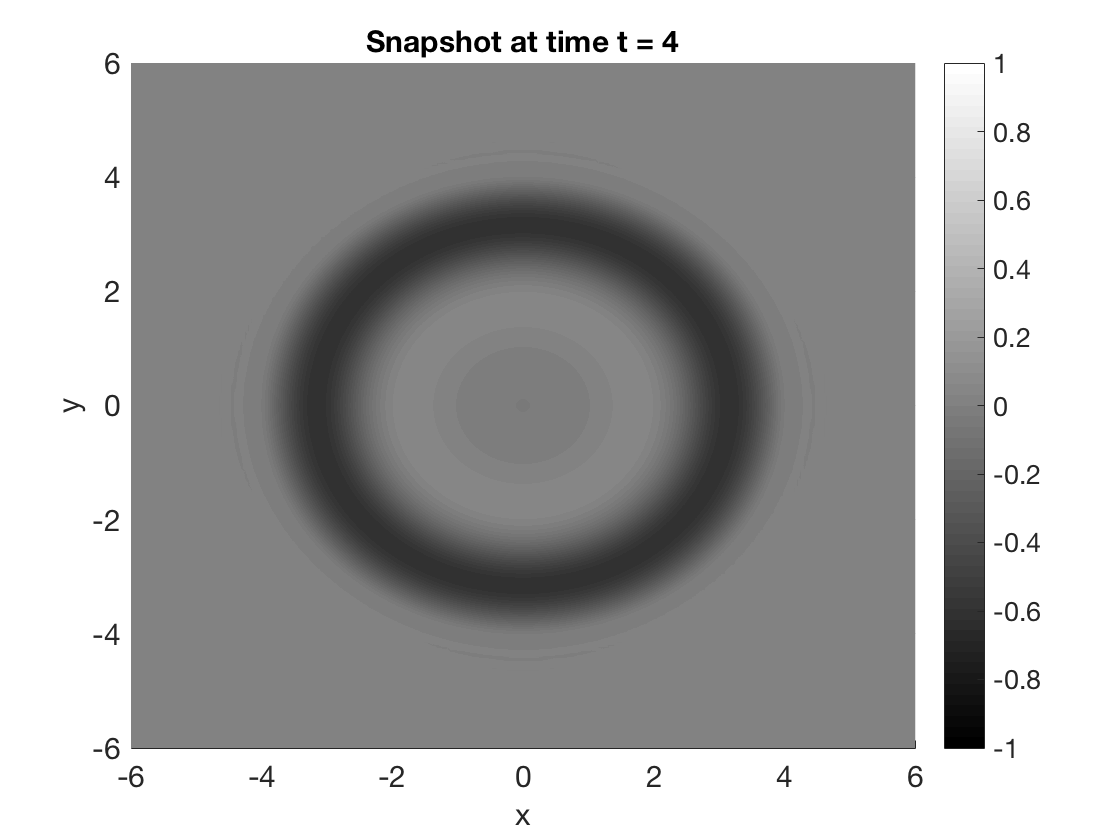}  
  \caption{Snapshot at $t = 4$}
  \label{fig:sub-second}
\end{subfigure}
\newline
\begin{subfigure}{.5\textwidth}
  \centering
  % include third image
  \includegraphics[width=1\linewidth]{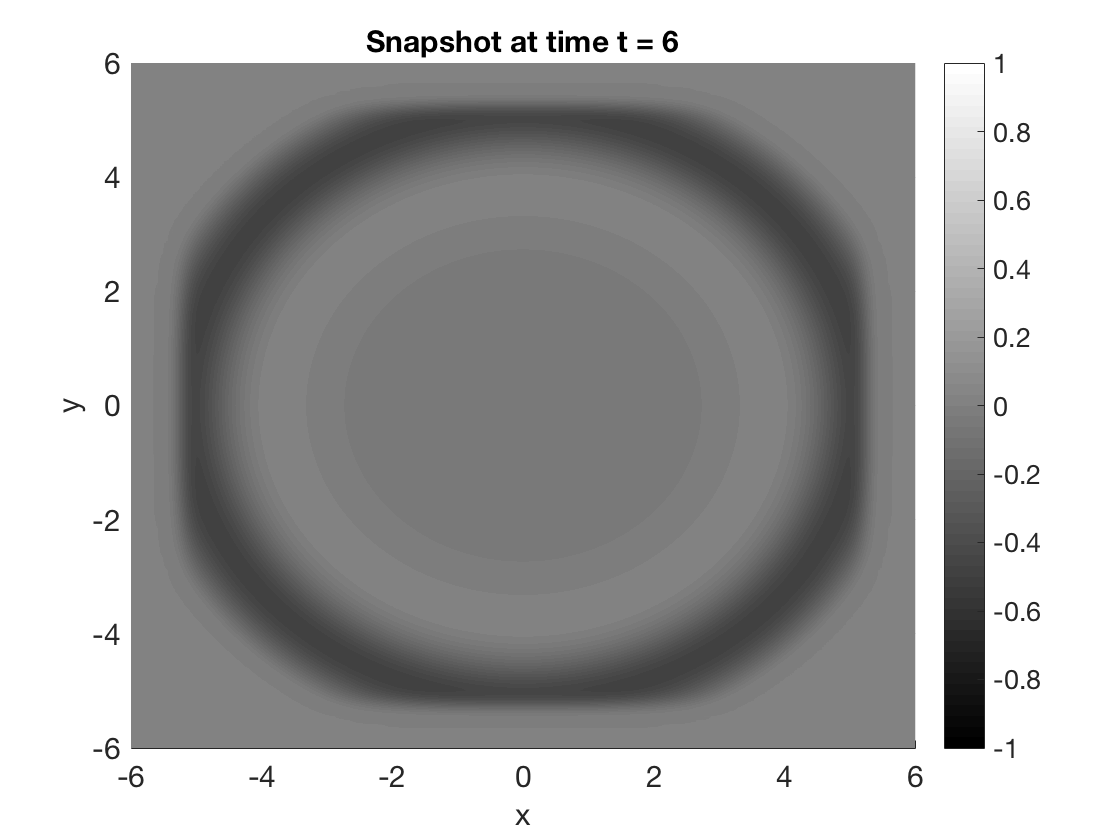}  
  \caption{Snapshot at $t = 6$}
  \label{fig:sub-third}
\end{subfigure}
\begin{subfigure}{.5\textwidth}
  \centering
  % include fourth image
  \includegraphics[width=1\linewidth]{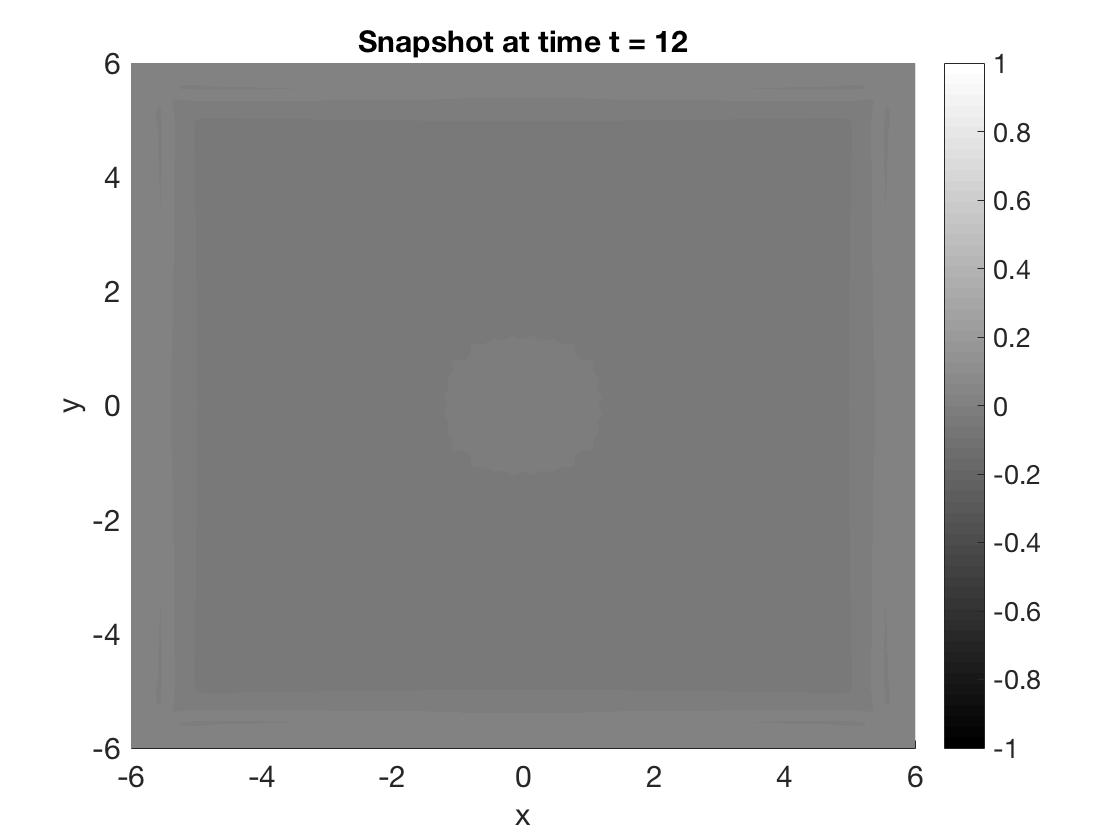}  
  \caption{Snapshot at $t = 12$}
  \label{fig:sub-fourth}
\end{subfigure}
\caption{Snapshots of the solution to the homogeneous case at different times using Q3 elements with resolution $h = 0.3$ and $dt = 0.01$}
\label{fig:snap_homogeneous}
\end{figure}

\subsection{Heterogeneous problem}
Next we consider the heterogeneous problem. The setup is the same as for the homogeneous problem except that we let the wave speed vary in the domain and we have chosen our end time to be $t = 14$ to make up for the lower wave speed. The wave speed is chosen as
\begin{align}\label{eq:wave_speed}
c(x,y)= \left\{ \begin{array}{cc} 
                0.75,& \hspace{5mm} y>2.4 \\
                1, & \hspace{5mm} |y|\leq 2.4. \\
                 1.25, & \hspace{5mm} y<-2.4 \\
                \end{array} \right .
\end{align}

Note that the element boundaries are chosen so that they aline with the material interfaces. 

We perform the same experiments for the heterogeneous case as for the homogeneous case. 
In Figures \ref{fig:max_pml_1_var}, \ref{fig:max_pml_2_var} and \ref{fig:max_pml_3_var} we can see the maximum PML error as a function of time for different resolutions and for different elements Q1, Q2 and Q3.

In Figure \ref{fig:pml_conv_var} we can see the convergence of the PML-error in maximum norm at the end time $t = 14$.
We can note that the convergence rate is slightly lower than expected for the Q3 elements.

In Figure \ref{fig:pml_lts_var} we can see the long-time behavior of the solution to the heterogeneous problem. 
To capture the long-time behavior we run the simulation until $ t = 150$, just as in the homogeneous case.
No asymptotic growth is observed, which again agrees with the stability analysis in Section \ref{sec:discrete}.
Finally, we can see snapshots of the solution at different times in Figure \ref{fig:snap_var}.
We can follow how the Gaussian pulse propagates and vanishes as it enters the PML. One can also see that the wave spreads faster when $y<-2.4$ and slower when $y>2.4$.
%%%Here

\begin{figure}
\begin{subfigure}{.5\textwidth}
%\label{fig:max_pml_1_var}
  \centering
  \includegraphics[width=.4\paperwidth]{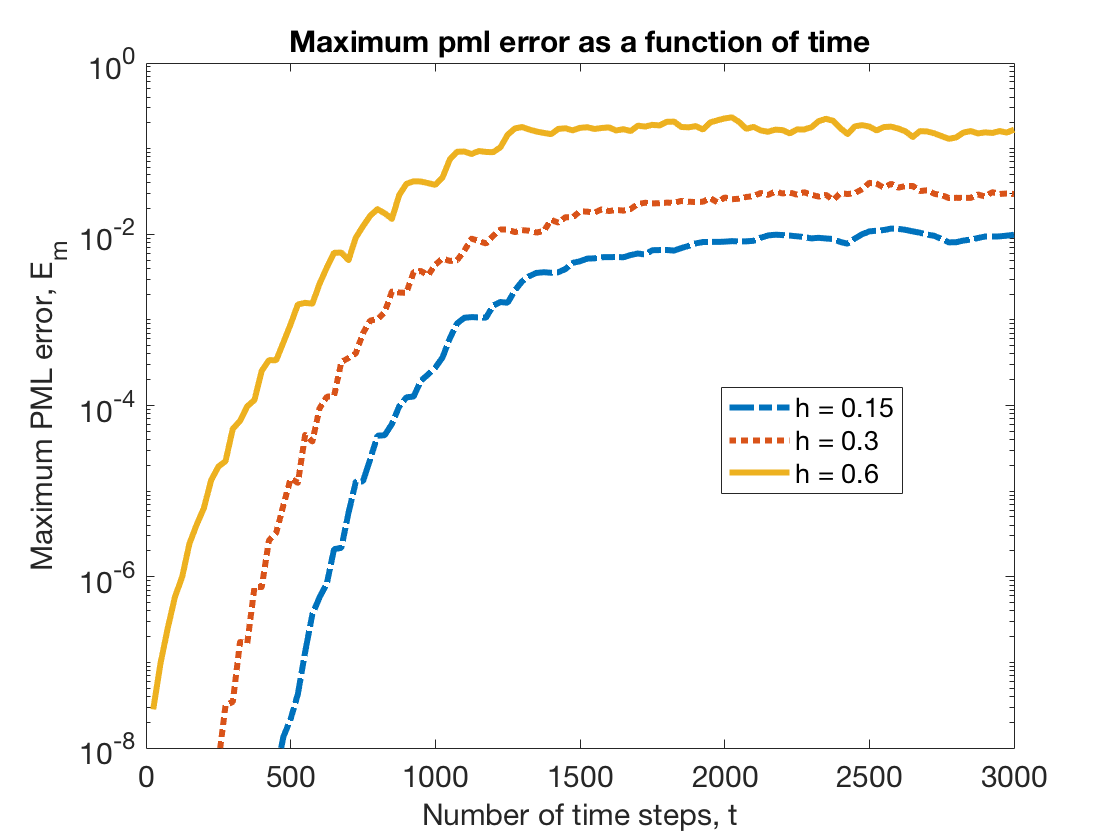}
  \caption{QPML error using Q1 elements}
  \label{fig:max_pml_1_var}
\end{subfigure}
\begin{subfigure}{.5\textwidth}

  \centering
  \includegraphics[width=.4\paperwidth]{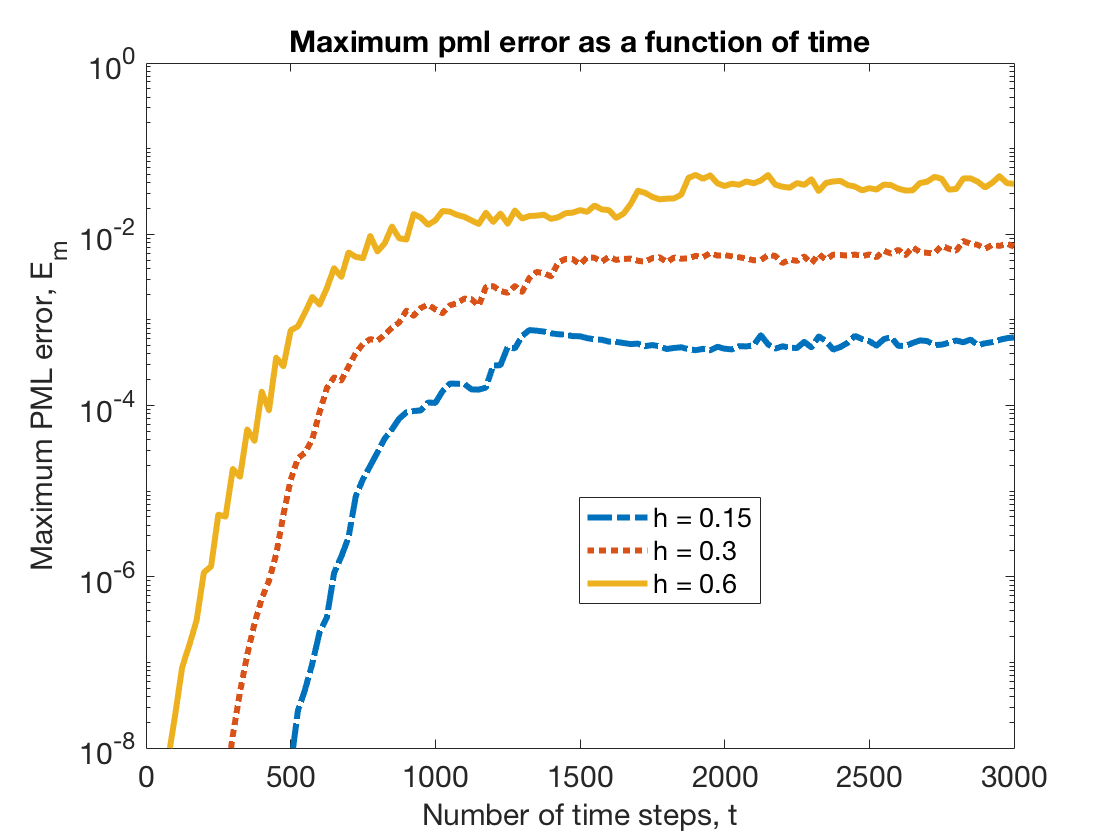}
  \caption{PML error using Q2 elements}
  \label{fig:max_pml_2_var}
\end{subfigure}
\newline

\begin{subfigure}{.5\textwidth}
  \centering
  
  \includegraphics[width=0.4\paperwidth]{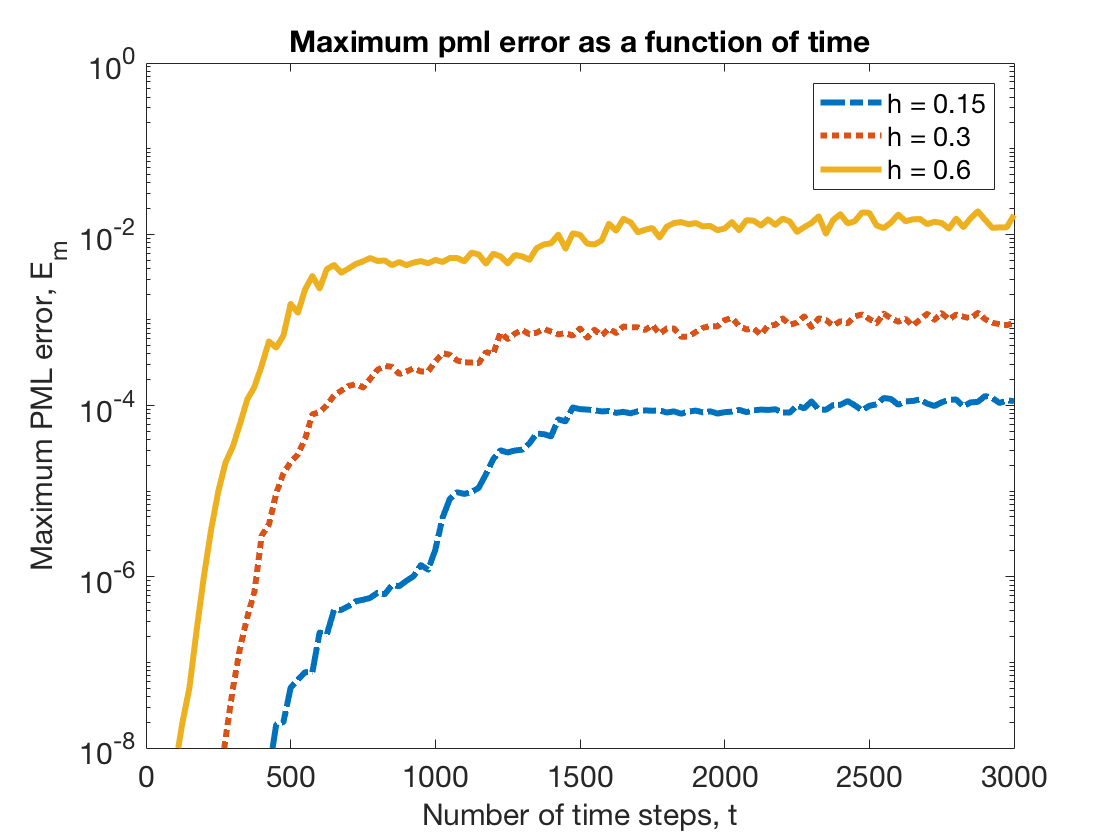}
  \caption{PML error using Q3 elements}
  \label{fig:max_pml_3_var}
\end{subfigure}
\begin{subfigure}{.5\textwidth}

  \centering
  \includegraphics[width=0.4\paperwidth]{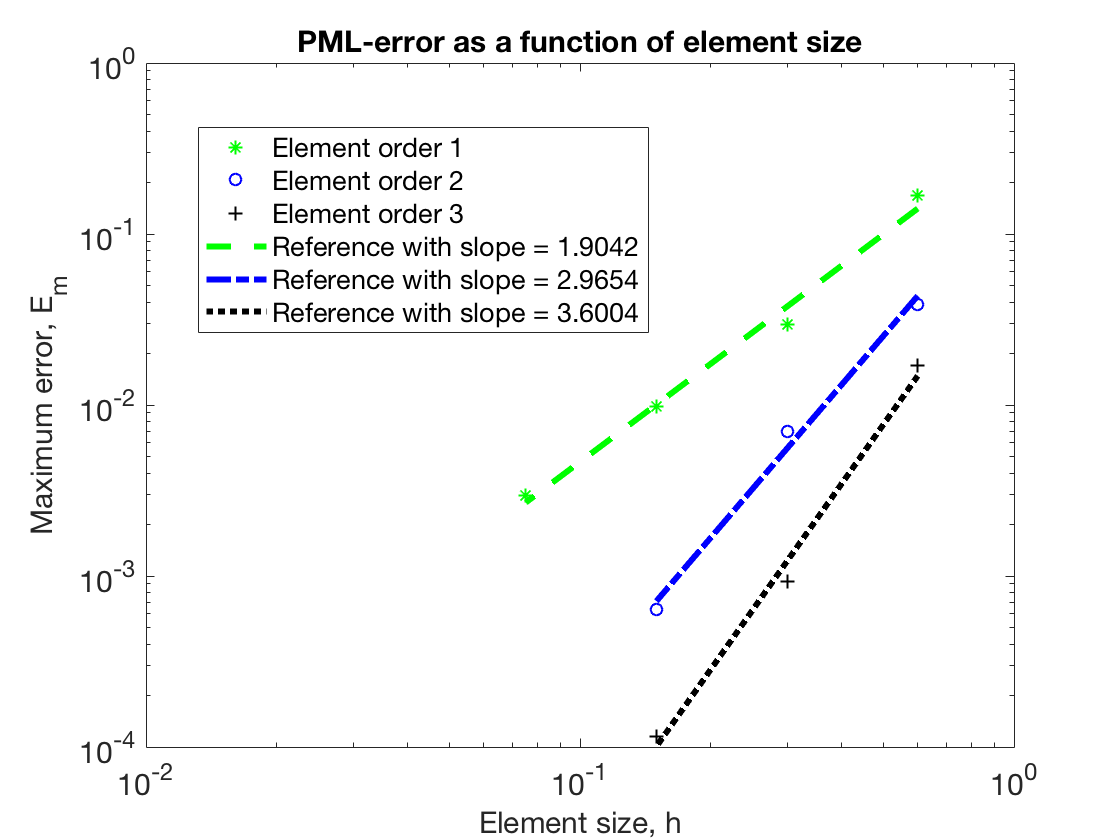}
  \caption{Convergence of the PML error in maximum norm.}
  \label{fig:pml_conv_var}
\end{subfigure}
\caption{PML errors versus number of time steps for different resolutions can be seen in Figures \ref{fig:max_pml_1_var}, \ref{fig:max_pml_2_var}, \ref{fig:max_pml_3_var} for elements Q1, Q2 and Q3 respectively.  In Figure \ref{fig:pml_conv} we can see the convergence of the PML error for Q1, Q2 and Q3 elements at end time $t = 14$ }
\label{fig:heterogeneous}
\end{figure}
%to here

\begin{figure}
\begin{subfigure}{.5\textwidth}
  \centering
  % include first image
  \includegraphics[width=1\linewidth]{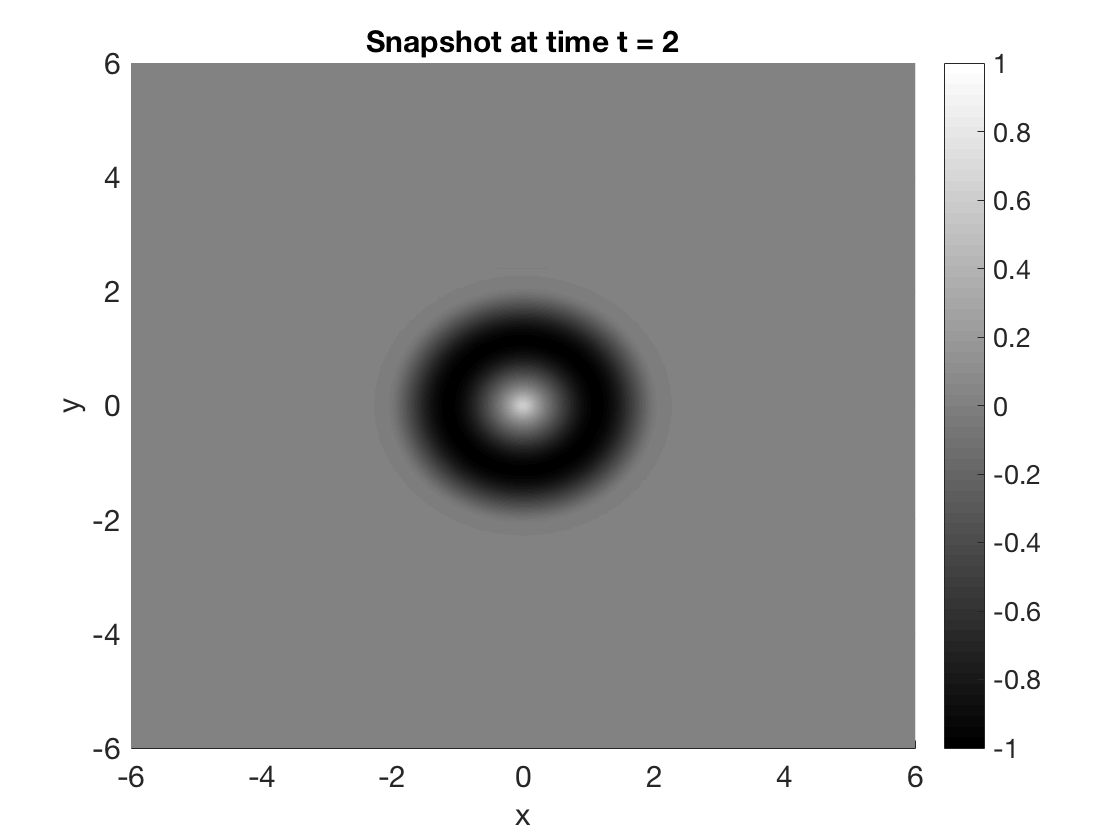}  
  \caption{Snapshot at $t = 2$}
  \label{fig:snap_var_1}
\end{subfigure}
\begin{subfigure}{.5\textwidth}
  \centering
  % include second image
  \includegraphics[width=1\linewidth]{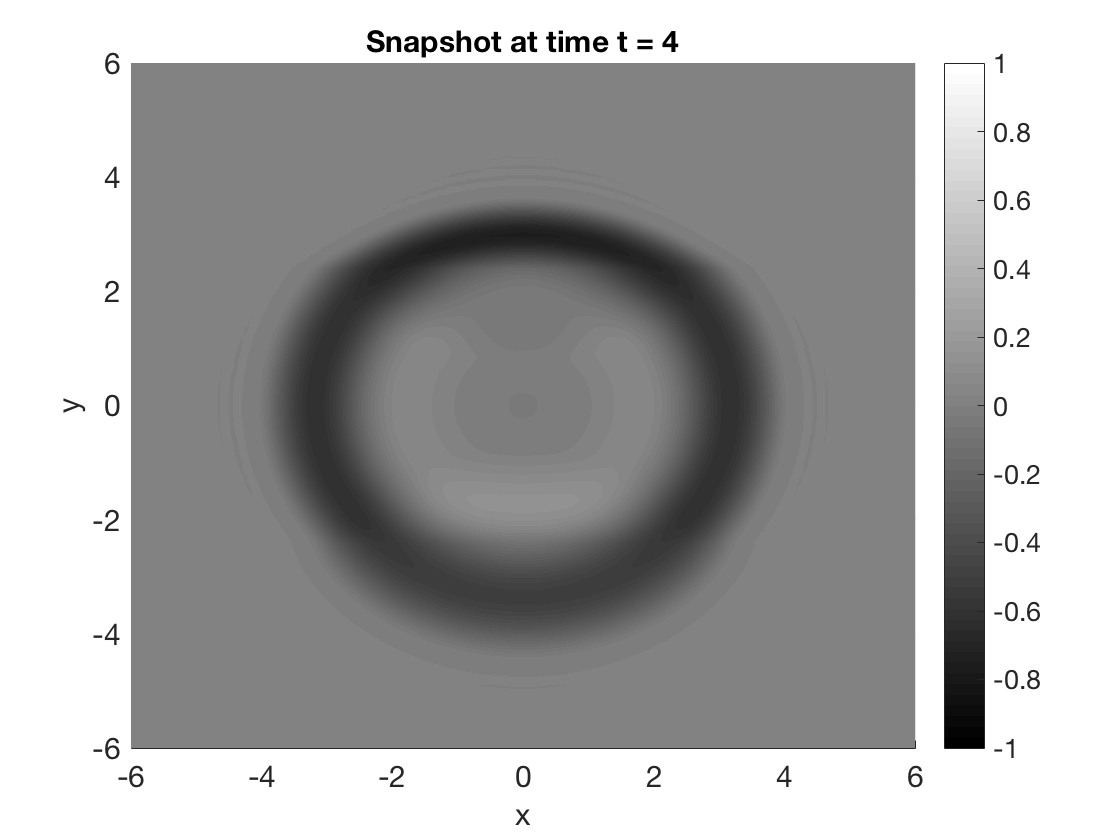}  
  \caption{Snapshot at $t = 4$}
  \label{fig:snap_var_2}
\end{subfigure}
\newline

\begin{subfigure}{.5\textwidth}
  \centering
  % include third image
  \includegraphics[width=1\linewidth]{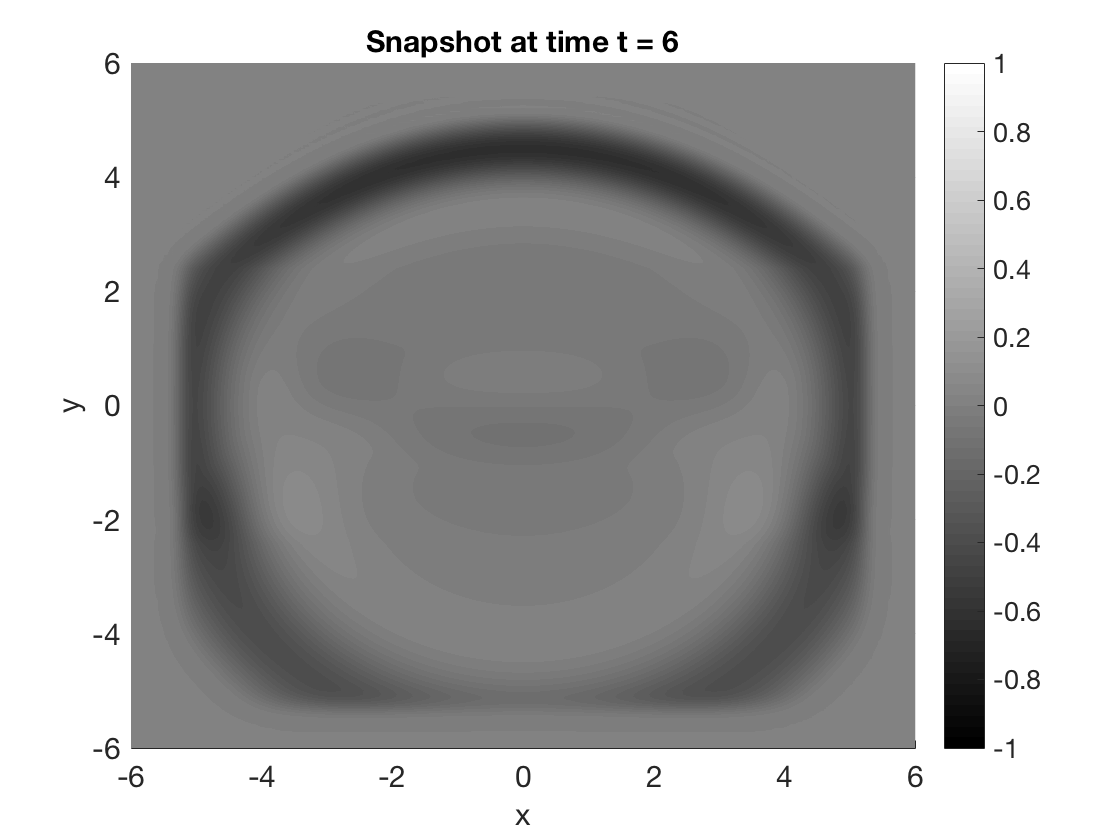}  
  \caption{Snapshot at $t = 6$}
  \label{fig:snap_var_3}
\end{subfigure}
\begin{subfigure}{.5\textwidth}
  \centering
  % include fourth image
  \includegraphics[width=1\linewidth]{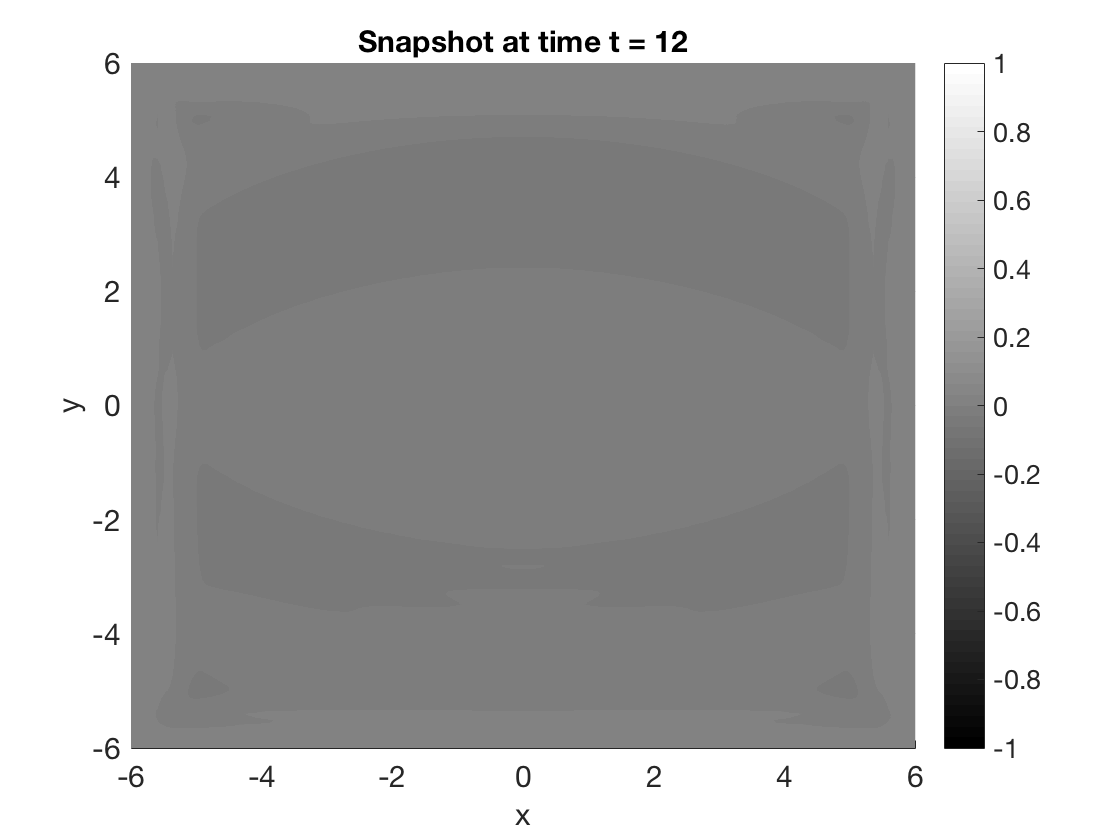}  
  \caption{Snapshot at $t = 12$}
  \label{fig:snap_var_4}
\end{subfigure}
\caption{Snapshots of the solution to the heterogeneous case at different times using Q3 elements with resolution $h = 0.3$ and $dt = 0.01$}
\label{fig:snap_var}
\end{figure}
\section{Summary and Discussion\label{sec:discussion}}
We have presented a stable FEM for the acoustic wave equation on second-order form with a PML. To begin with, we prove an energy estimate in Laplace space, which extends the result in \cite{duru2019energy} from constant damping to continuously varying damping. We also formulate the problem weakly in terms of a bi-linear form. In this formulation, we use the standard $H^1$ space for the physical variable and a combination of $H^1$ and $L_2$ spaces for the auxiliary variables. Our FEM is then obtained by restricting to the corresponding continuous and discontinuous piece-wise polynomial finite-dimensional subspaces. The chosen weak form allows us to mimic the energy estimate in a discrete setting.

An a priori error estimate for the case of constant damping follows by applying standard techniques in Laplace space. It indicates convergence of order $p+1$, where $p$ is the local polynomial order. For variable damping, a technical difficulty is related to the fact that the discrete stability relies on quadrature while exact integration is used in the continuous case. This prevents the straightforward extension of the convergence result to spatially varying damping. 

Numerical experiments for the case of varying damping verify the stability and convergence results. We compute in 2 space dimensions and use square, Cartesian elements. Note that the theoretical results presented in Section \ref{sec:discrete} are more general and cover the 3D problem with no hanging nodes as the only restriction on element shape. Therefore we are confident that when the method is applied to a 3D problem, it will be stable and have good accuracy properties.

Note that the method uses a discontinuous function space for some of the auxiliary variables. Experiments using continuous function spaces for all variables were also performed, but with less accurate results. The degradation of accuracy was pronounced for piece-wise linears, in analogy with results in \cite{cockburn2016static}. 

A standard approach to stability is to derive energy estimates in physical space. The analysis in this paper is done in Laplace space, both for the continuous and the discrete problems. It demonstrates how an energy estimate in Laplace space for the continuous problem can be transformed to stability and accuracy results for a discrete method. 
By performing analysis in Laplace space, the energy approach is applicable to many more problems, and it is our expectation that the techniques presented in this paper will be useful for many different problems.
\section*{Acknowledgments}
This research was partially supported by the Swedish Research Council.
%\bibliographystyle{spmpsci}
%\bibliography{sn-bibliography}
%\bibliography{references}
\bibliographystyle{plain}
\bibliography{references}

\end{document}